\newcommand{\F}{\mathbb{F}}
\newcommand{\Z}{\mathbb{Z}}
\DeclareMathOperator{\RC}{RC\_CLASS}
\DeclareMathOperator{\ST}{ST\_CLASS}
\newcommand{\limp}{\mathbin\rightarrow}
\newcommand{\liff}{\mathbin\leftrightarrow}
\newcommand{\nliff}{\mathbin{\mspace{2mu}\not\mspace{-2mu}\leftrightarrow}}
\newcommand{\MOLS}[2]{#1\operatorname{MOLS}(#2)}
\begin{document}

\title{Orthogonal Latin Squares of Order Ten with Two Relations: A SAT Investigation}
\titlerunning{Orthogonal Latin Squares of Order Ten with Two Relations}

\author{Curtis Bright\inst{1,2,3}\orcidID{0000-0002-0462-625X} \and
Amadou Keita\inst{2}\orcidID{0009-0001-5861-4617} \and
Brett Stevens\inst{3}\orcidID{0000-0003-4336-1773}}
\authorrunning{C. Bright et al.}
\institute{School of Computer Science, University of Waterloo,
Canada\\
 \and
Department of Mathematics and Statistics, University of Windsor,
Canada \\
\and
School of Mathematics and Statistics, Carleton University,
Canada \\
\email{cbright@uwaterloo.ca}, \email{keitaa@uwindsor.ca}, \email{brett@math.carleton.ca}
}
\maketitle
\begin{abstract}
A $k$-net($n$) is a combinatorial design equivalent to $k-2$ mutually orthogonal Latin squares of order $n$.
A relation in a net is a linear dependency over $\F_2$ in the incidence matrix of the net.
A computational enumeration of all orthogonal pairs of Latin squares of order 10
whose corresponding nets have at least two nontrivial relations was achieved by Delisle in 2010
and verified by an independent search of Myrvold.
In this paper, we confirm the correctness of their exhaustive enumerations
with a satisfiability (SAT) solver approach instead of using custom-written backtracking code.
Performing the enumeration using a SAT solver has at least three advantages.
First, it reduces the amount of trust necessary, as SAT solvers produce independently-verifiable
certificates that their enumerations are complete.
These certificates can be checked by formal proof verifiers that are relatively simple pieces of software, and therefore easier to trust.
Second, it is typically more straightforward and less error-prone to use a SAT solver over writing search code.
Third, it can be more efficient to use a SAT-based approach, as SAT solvers are highly optimized
pieces of software incorporating backtracking-with-learning for improving the
efficiency of the backtracking search.  For example, the SAT solver completely enumerates
all orthogonal pairs of Latin squares of order ten with two nontrivial relations in under 2 hours on
a desktop machine, while Delisle's 2010 search used 11,700 CPU hours.
Although computer hardware was slower in 2010, this alone cannot explain the improvement
in the efficiency of our SAT-based search.
\keywords{Latin square \and orthogonal Latin square \and net \and satisfiability solving.}
\end{abstract}

\section{Introduction}

A $k$-net($n$) is a set of $n^2$ points and $kn$ lines with the following properties:

\begin{enumerate}
\item Every line contains $n$ points and every point lies on $k$ lines.
\item There are $k$ parallel classes of lines, with each parallel class containing $n$ lines
that do not intersect each other.
\item Every pair of lines from different parallel classes intersect exactly once.
\end{enumerate}
An \emph{incidence matrix} of a $k$-net($n$) is an $n^2\times kn$ matrix over $\F_2=\{0,1\}$ whose
$(i,j)$th entry is 1 exactly when the $i$th point lies on the $j$th line.
In this paper without loss of generality we assume that the lines in a net are always ordered
by parallel class, i.e., the first parallel class consists of the first
$n$ lines in the net, the second parallel class consists of the next $n$ lines in the net, etc.
Under this ordering, the axioms of a net imply that if $A$ is the incidence matrix of a $k$-net($n$), then
\begin{equation}
A^TA = \begin{bmatrix}
nI & J & \cdots & J \\
J & nI & \cdots & J \\
\vdots & \vdots & \ddots & \vdots \\
J & J & \cdots & nI
\end{bmatrix}
\end{equation}
where here $A$ and $A^T$ are considered as matrices over $\Z$,
$I$ is the identity matrix of order~$n$, and $J$ is the all-ones matrix of order~$n$.
This follows because the $(i,j)$th entry of $A^TA$ counts the number of points occurring
on both the $i$th and $j$th line, and this count is $1$ when the lines are in different parallel
classes, $0$ when the lines are distinct and in the same parallel class, and $n$ when the lines
are identical (i.e., $i=j$).

It is well-known that a $k$-net($n$) is equivalent to $k-2$ mutually orthogonal Latin squares of order $n$, denoted $\MOLS{(k-2)}{n}$,
and it is straightforward to convert a $k$-net to a collection of $k-2$ Latin squares and vice versa.
Roughly speaking, the net's first parallel class corresponds to Latin square rows, the net's second parallel class corresponds to
Latin square columns, and for $\ell>2$ the net's $\ell$th parallel class corresponds to symbols
of the $(\ell-2)$th Latin square.
When $(i,j)$th entry of the $(\ell-2)$th Latin square contains symbol~$s$, its corresponding
net will contain a point that lies on the $i$th line of the first parallel class,
the $j$th line of the second parallel class, and the $s$th line
of the $\ell$th parallel class.

A \emph{relation} in a $k$-net($n$) is a linear dependency in the columns of $A$ over $\F_2$. The \emph{rank} of $k$-net($n$) is the rank of its incidence matrix.  The rank of a $k$-net($n$) is at most $kn-k+1$ (c.f.~\cite{howard2010counterexample}) since there are $k-1$ trivial relations formed by the lines in the first parallel class and the $i$th parallel class for $2\leq i\leq k$.
Howard~\cite{howardthesis} considered nets of order $n\equiv 2\pmod{4}$ of which
the case $n=10$ is of particular interest.
She showed that a 6-net(10) has rank $\leq53$ and therefore contains at least two nontrivial relations, and also that a 4-net(10) has rank $\geq33$ and therefore contains at most four nontrivial relations.

Delisle, under the supervision of Myrvold, computationally enumerated all 4-nets(10) with at least two nontrivial relations~\cite{delisle2010search}.  They found that there exist no 4-nets(10) with four nontrivial relations, up to isomorphism there are six 4-nets(10) with exactly three nontrivial relations, and up to isomorphism there are 85 4-nets(10) with exactly two nontrivial relations.  More recently, Gill and Wanless~\cite{Gill2023} computationally enumerated all 4-nets(10) with at least one nontrivial relation and found that up to isomorphism there are exactly 18,526,229
4-nets(10) with exactly one nontrivial relation.

A 4-net(10) is equivalent to an orthogonal pair of Latin squares of order 10.  Latin squares of order 10 are of particular interest because 10 is the first order for which the largest collection of mutually orthogonal Latin squares is unknown.  It is known that 9 mutually orthogonal Latin squares of order 10 would be equivalent to a projective plane of order ten, but this was ruled out by exhaustive computer search~\cite{lam1989non}.  Combined with a result of Bruck~\cite{bruck1963finite}, this implies that $\MOLS{7}{10}$ do not exist.  Thus, the maximum number of mutually orthogonal Latin squares of order 10 is between 2 and 6.
As a consequence of the searches of Delisle~\cite{delisle2010search} and Gill and Wanless~\cite{Gill2023}, all $\MOLS{2}{10}$ with nontrivial relations are known and none of them are part of a $\MOLS{3}{10}$.

In this paper, we verify the searches of Delisle~\cite{delisle2010search} by exhaustively enumerating all $\MOLS{2}{10}$ with two nontrivial relations.  Our approach differs from Delisle's original search and the independent verification of Gill and Wanless~\cite{Gill2023} because our search uses a Boolean satisfiability (SAT) solver.  We reduce the existence of a $\MOLS{2}{10}$ with two relations into a problem of Boolean logic and then use a SAT solver to find all solutions of the logic problem (and correspondingly all $\MOLS{2}{10}$ with two relations).
SAT solvers can be surprisingly effective at solving various problems in mathematics~\cite{bright2020effective} and recently they
have been used with increasing frequency to solve problems in combinatorics and design theory~\cite{Zhang2021}.  For example, in 2021 they were used to verify Lam et al.'s result that order ten projective planes do not exist~\cite{bright2021sat}.

Traditionally, backtracking algorithms are used to computationally search for combinatorial designs~\cite{kaski2006}.  SAT solvers offer an alternative approach to the backtracking paradigm.  Although SAT solvers also perform a form of backtracking, they also use a powerful learning process known as conflict-driven clause-learning~\cite{MarquesSilva1996}.  This technique along with many other optimizations and heuristics that have been fine-tuned over decades enables SAT solvers to outperform traditional backtracking search in many problems of interest.

Moreover, even if SAT solvers were not as efficient as custom backtracking approaches, they have the advantage of making the search process less error-prone because using a SAT solver does not require writing any code for performing a search.  Unfortunately, it is a reality of software development that almost all computer programs have bugs, and writing efficient search code is an inherently error-prone process~\cite{Lam1990}.  This is a particularly important consideration when a computer program purports to perform an exhaustive classification of a mathematical object.  How can we trust that a bug in the program did not cause some objects to be missed in the search?  It is typically impossible for even professional programmers to guarantee their code has no bugs due to the inherent difficulties in writing computer code.  One way of decreasing the chance that a bug results in a missed object is to perform the same search with two implementations written independently.  While this does reduce the chance of something being missed, it still relies on trusting code that cannot be certified to be correct.  Indeed, it has happened that mathematical designs have been missed by a search with an independent verification, for example, in Lam's problem~\cite{Bright2022} and the enumeration of good matrices of order 27~\cite{Bright2019}.

Using a SAT solver sidesteps the need for writing search code.  Instead, one generates a ``SAT encoding'' specifying the properties that the object in question must satisfy, and then the SAT solver searches for the object.  Moreover, the SAT solver \emph{does not need to be trusted itself}, because during the search it produces a certificate that can be checked for correctness by a proof verifier---a simpler piece of software that can be written independently from the SAT solver.  This approach does rely on the SAT encoding being correct, but this typically requires less trust than would be required of a search procedure.
We describe the SAT encoding we use in Section~\ref{sec:encoding}, a crucial component of which is a SAT encoding of the symmetry breaking used in Delisle's original search
(described in detail along with other background in Section~\ref{sec:background}).

SAT solvers do not always perform well on mathematical problems, particularly
when there is underlying structure in the problem unknown to the solver.  As an example,
integer factorization can be reduced to the SAT problem, but
SAT solvers cannot factorize integers nearly as efficiently
as algorithms specifically designed to do so using number theoretic concepts.
Augmenting SAT solvers with number theoretic reasoning improves their performance,
but even still they only outperform traditional algorithms when extra information is available
such as random bits of the prime factors~\cite{Ajani2024}.
Despite SAT solvers having no efficiency guarantee,
they did perform remarkably well in the enumeration of orthogonal pairs of Latin squares of order ten with two relations.
Only minor modifications to the SAT solver and proof verifier were required, as described in Section~\ref{sec:enumerating}.
We enumerated all 91 pairs of Latin squares of order ten with two relations in less than 1.75 hours on a desktop computer (see Section~\ref{sec:results}).
Conversely, Delisle's original backtracking search, written in the programming language C and optimized for speed, used around 488 CPU days in 2010.
While some of the improvements in speed of our SAT-based search is undoubtedly from the improvements in computer hardware,
this alone cannot account for an over $6000\times$ speedup in CPU time, demonstrating the computational efficiency of SAT solvers on problems of this type.

\section{Background}\label{sec:background}

The \emph{type} of a relation is a list of the number of lines each parallel class contributes
to the relation.  For example, a relation in a 4-net($n$) that has 4 lines from the first two
parallel classes and 6 lines from the last two parallel classes has type $[4,4,6,6]$.
Because the union of two parallel classes form a trivial relation, without loss of generality
any relation can be ``complemented'' and written in a form where at most one entry
in the type is larger than $n/2$.
In her PhD thesis, Howard~\cite{howardthesis} studies relations in nets of order $n\equiv 2\pmod{4}$.
In particular, she proved the following proposition.

\begin{proposition}[{\cite[Prop.~5.4]{howardthesis}}]\label{prop4netrel}
Every nonempty relation in a 4-net of order $n\equiv2\pmod4$ with at most $n/2$ lines
in three classes must be of the type
$[k,k,k,k]$ where $k$ is an even integer with $n/3\leq k<n/2$.
\end{proposition}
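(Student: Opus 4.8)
The plan is to translate the $\F_2$ linear-dependency condition into a statement about the two orthogonal Latin squares $L,M$ underlying the 4-net, and then to squeeze out enough integer identities to pin down the type. Writing the four parallel classes as the rows, the columns, and the symbols of $L$ and $M$, a relation selects subsets $R_1,\dots,R_4$ of lines with $|R_i|=k_i$, and the defining condition becomes the requirement that for every cell $(x,y)$ the parity of $\alpha(x)+\beta(y)+\gamma(L(x,y))+\delta(M(x,y))$ is even, where $\alpha,\beta,\gamma,\delta$ are the indicator functions of $R_1,R_2,R_3,R_4$. Since every parallel class plays a symmetric role in a net, I am free to designate any two classes as the rows and columns, which will let me produce several identities from one computation.

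First I would show that each $k_i$ is even. Summing the parity condition over the $n$ points of a single line of class $i$, and using that a line of one class meets each line of any other class exactly once, forces $\sum_{j\neq i}k_j$ to be even for every $i$; subtracting these relations shows all $k_i$ share a common parity, and since their total is then forced to be even, every $k_i$ is even. Because $n\equiv 2\pmod 4$ the value $n/2$ is odd, so no $k_i$ can equal $n/2$ --- a fact I will use to upgrade ``$\le n/2$'' to ``$<n/2$'' and to collapse boundary cases.

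The heart of the argument is a quadratic identity. Fixing a row $x$ and counting columns $y$ in two ways, the parity condition forces the number $t_x$ of columns with $L(x,y)\in R_3$ and $M(x,y)\in R_4$ to take one of two explicit values according to whether $x\in R_1$. Summing $t_x$ over all rows and invoking the orthogonality of $L$ and $M$, which gives $\sum_x t_x=k_3k_4$, yields
\[ n(k_3+k_4-k_1-k_2)=2(k_3k_4-k_1k_2), \]
and by the symmetry noted above the analogous identity holds for each of the three ways of splitting the four classes into two pairs. Taking pairwise differences of these identities factors them cleanly: for each such pair, the two class-counts are equal or the two complementary counts sum to $n$. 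Separately, the bound $t_x\ge 0$ for a selected row shows that the three classes other than the selected one have counts summing to at least $n$.

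Finally I would combine these. Under the hypothesis at least three classes have count $\le n/2$; designate the remaining class as the distinguished one. If it too is $\le n/2$, then every count is strictly below $n/2$ by evenness, so no complementary pair can sum to $n$; the factored conditions then force the three small counts equal, and one original identity forces the distinguished count equal as well, giving type $[k,k,k,k]$. The nonnegativity bound becomes $3k\ge n$, i.e.\ $k\ge n/3$, and evenness upgrades $k\le n/2$ to $k<n/2$. The main obstacle --- and the one case that does not simply fall out --- is when the distinguished class exceeds $n/2$; here I expect a short argument by contradiction, splitting on how many of the three small counts coincide. In each subcase the factored conditions together with one original identity over-determine the counts, forcing either an equality that contradicts the subcase or a count that exceeds $n/2$ and so contradicts the hypothesis. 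Disposing of this exceptional single-large-class case cleanly is the most delicate step; the evenness observation and the quadratic identity are exactly what make each subcase collapse.
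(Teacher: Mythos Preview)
The paper does not actually prove this proposition; it is quoted from Howard's thesis (as the citation in the statement indicates) and used as a black box. So there is no in-paper proof to compare your proposal against.

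That said, your plan is sound and essentially complete. The parity argument for evenness of the $k_i$ is correct (summing over a line of class~$i$ gives $\sum_{j\neq i}k_j\equiv0\pmod2$, whence all $k_i$ have the same parity, and three odd summands cannot sum to an even number). The quadratic identity $n(k_3+k_4-k_1-k_2)=2(k_3k_4-k_1k_2)$ follows exactly as you describe, and taking pairwise differences \emph{and sums} of the three symmetric versions yields, for every partition $\{i,j\}\cup\{a,b\}$ of $\{1,2,3,4\}$, the factored condition $k_i=k_j$ or $k_a+k_b=n$. When all $k_i<n/2$ the second alternative is impossible everywhere, forcing all $k_i$ equal; the lower bound $3k\ge n$ then drops out of the nonnegativity of your $t_x$ on a row in $R_1$ (such a row exists since $k>0$).

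One remark: you flag the single-large-class case as ``the most delicate step'' requiring a subcase split, but it is in fact the easiest. If, say, $k_4>n/2$ while $k_1,k_2,k_3<n/2$, then the factored condition ``$k_2=k_4$ or $k_1+k_3=n$'' is violated on both sides (the first because $k_2<n/2<k_4$, the second because $k_1+k_3<n$), an immediate contradiction. No further case analysis is needed.
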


For example, Proposition~\ref{prop4netrel} implies that every nontrivial relation in
a 4-net(10) is complementable to one of type $[4,4,4,4]$, because every nontrivial relation
in a 4-net(10) is complementable to a nonempty relation with at most 5 lines in three classes.
Now consider the case of a 4-net(10) with two linearly independent nontrivial relations.
Note the sum of two nontrivial relations $R_1$
and~$R_2$ will be a third relation, and because
the relations are over $\F_2$, the sum is the symmetric difference $(R_1\cup R_2)\setminus(R_1\cap R_2)$.  Although this third relation
will not be linearly independent to $R_1$ and $R_2$, it will be nontrivial, and therefore by Proposition~\ref{prop4netrel}
complementable to a relation of the type $[4,4,4,4]$.
The following proposition constrains the manner in which two linearly independent relations intersect.

\begin{proposition}[cf.~\cite{delisle2010search}]\label{prop2reltypes}
Suppose\/ $R_1$ and\/ $R_2$ are two linearly independent relations of type $[4,4,4,4]$ in a 4-net(10).
Then each parallel class contains exactly one or two lines from both\/ $R_1$ and\/ $R_2$.
\end{proposition}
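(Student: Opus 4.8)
The plan is to count lines one parallel class at a time and then exploit the fact that the $\F_2$-sum $R_1+R_2$ is again a nontrivial relation, to which Proposition~\ref{prop4netrel} applies. For the $\ell$th parallel class, among its $10$ lines let $c_\ell$ be the number lying in both $R_1$ and $R_2$, let $a_\ell$ be the number in $R_1$ but not $R_2$, and let $b_\ell$ be the number in $R_2$ but not $R_1$. Since $R_1$ has type $[4,4,4,4]$ it contributes four lines to this class, giving $a_\ell+c_\ell=4$, and the type of $R_2$ gives $b_\ell+c_\ell=4$. Hence $a_\ell=b_\ell=4-c_\ell$, so the number of lines lying in exactly one of $R_1,R_2$ in this class is $a_\ell+b_\ell=8-2c_\ell$.

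These lines lying in exactly one of $R_1,R_2$ form the support of the symmetric difference, i.e.\ of $R_1+R_2$ over $\F_2$, so $R_1+R_2$ has type $[\,8-2c_1,\,8-2c_2,\,8-2c_3,\,8-2c_4\,]$. Linear independence gives $R_1+R_2\neq 0$, so this relation is nonempty; and since its type entries lie in $\{0,2,4,6,8\}$ and never equal $10$, it cannot be one of the trivial relations (each of whose type entries is $0$ or $10$). Thus $R_1+R_2$ is nontrivial, and by the complementing argument together with Proposition~\ref{prop4netrel} it is complementable to a relation of type $[4,4,4,4]$.

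It remains to read off the resulting constraint. Complementing replaces the lines of a pair of parallel classes by their complements within those classes (and, by composition, any even number of classes), sending each affected type entry $t$ to $10-t$ while fixing the rest; to turn $[\,8-2c_1,\dots,8-2c_4\,]$ into $[4,4,4,4]$, each entry must therefore satisfy $8-2c_\ell=4$ (if its class is unchanged) or $10-(8-2c_\ell)=4$ (if its class is flipped). Among the even values $\{0,2,4,6,8\}$, only $4$ and $6$ meet one of these conditions, forcing $8-2c_\ell\in\{4,6\}$ and hence $c_\ell\in\{1,2\}$ for every $\ell$, which is exactly the claim. I expect the one delicate point to be the bookkeeping of the complementing operation---confirming that it acts on parallel classes via $t\mapsto 10-t$, so that matching $R_1+R_2$ to $[4,4,4,4]$ really does pin each entry to $4$ or $6$; once that is settled the numerical conclusion follows immediately.
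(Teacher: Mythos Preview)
Your argument is correct and follows essentially the same route as the paper's proof: both compute that the $\ell$th parallel class contributes $8-2c_\ell$ lines to $R_1+R_2$, invoke that $R_1+R_2$ is nontrivial and hence complementable to type $[4,4,4,4]$, and conclude $8-2c_\ell\in\{4,6\}$. Your version is simply more explicit about why $R_1+R_2$ is nontrivial and how complementing acts on type entries, whereas the paper absorbs these points into the discussion preceding the proposition.
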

\begin{proof}
Suppose $x$ is the number of lines in the first parallel class and both $R_1$ and $R_2$.
Since the sum of $R_1$ and $R_2$ is a third relation that is complementable to one of type $[4,4,4,4]$,
we have that $(4-x)+(4-x)$, the number of lines in the third relation and the first parallel class,
is either $4$ or $10-4=6$.
In the former case $x=2$ and in the latter case $x=1$.  The same argument applies to every parallel class. \qed
\end{proof}

Following Delisle, we will suppose the lines appearing in both $R_1$ and $R_2$ are ordered
to appear first in each parallel class, followed by the lines in $R_1$ and not $R_2$, and then
the lines in $R_2$ and not $R_1$.  The remaining lines, those not in either $R_1$ and $R_2$, appear last.
Say that there are $x_i$ lines in parallel class $i$ and $R_1\cap R_2$, there are
$y_i$ lines in parallel class~$i$ and $R_1\setminus R_2$, and there are $z_i$ lines in parallel class~$i$
and $R_2\setminus R_1$.  We use the notation $[[x_1,y_1,z_1],[x_2,y_2,z_2],[x_3,y_3,z_3],[x_4,y_4,z_4]]$
to denote the form of $R_1$ and $R_2$.  By Proposition~\ref{prop2reltypes}, without loss of generality
the possible forms can be taken to be one of the five cases
\begin{gather}
[[1,3,3],[1,3,3],[1,3,3],[1,3,3]], \tag{case 1} \\
[[1,3,3],[1,3,3],[1,3,3],[2,2,2]], \tag{case 2} \\
[[1,3,3],[1,3,3],[2,2,2],[2,2,2]], \tag{case 3} \\
[[1,3,3],[2,2,2],[2,2,2],[2,2,2]], \tag{case 4} \\
[[2,2,2],[2,2,2],[2,2,2],[2,2,2]]. \tag{case 5}
\end{gather}
Furthermore, Delisle used a counting argument to rule out
cases~2 and~4~\cite[pg.~17]{delisle2010search}.  Thus, Delisle's search focused on cases 1, 3, and~5.
Interestingly, using our approach a SAT solver rules out cases 2, 3, and~4 in a few seconds each.
Given case~3 was ruled out by Delisle using 23 days of compute time, the SAT solver
shows its strength at uncovering contradictions by ruling out case~3 quite quickly.
At the end of Section~\ref{sec:results}, we also give an argument using no search and a short computation
that rules out case~3.  We found this argument
independently of the SAT computation.

\subsection{Delisle's symmetry breaking}\label{sec:symmetry-breaking}

In order to perform an exhaustive enumeration up to isomorphism it is advantageous to restrict the search
space as much as possible without losing any solutions up to isomorphism---this process is known as
\emph{symmetry breaking}.  In this section we recount the symmetry breaking used in Delisle's
thesis~\cite{delisle2010search} for 4-nets(10) with two relations (stated in terms of
$\MOLS{2}{10}$)\@.  Say $(A,B)$ is a $\MOLS{2}{10}$ whose corresponding net has 2 nontrivial relations.
Delisle's method for adding symmetry breaking constraints on $(A,B)$ is based on adding additional
constraints on the entries in the first column and row of $A$ and $B$.

Suppose $(A,B)$ is a pair of orthogonal Latin squares with two relations of the form
\begin{equation}
[[x_1,y_1,z_1],[x_2,y_2,z_2],[x_3,y_3,z_3],[x_4,y_4,z_4]] .
\end{equation}
The form of the relations define equivalence classes on the rows (from parallel class 1), columns
(from parallel class 2), symbols of $A$ (from parallel class 3), and symbols of~$B$ (from parallel class 4).
For example, the row equivalence classes are determined by the values of $x_1$, $y_1$, and $z_1$:
explicitly, the equivalence classes of rows will be defined by the index sets $[0,x_1)$,
$[x_1,x_1+y_1)$, $[x_1+y_1,x_1+y_1+z_1)$, and $[x_1+y_1+z_1,10)$.
Note that the following six equivalence operations
on pairs of Latin squares $(A,B)$ preserve
the orthogonality of the squares and the form of the relations.
\begin{enumerate}
\item Permutation of rows of $A$ and $B$ preserving row equivalence classes (the same permutation applied to $A$ and $B$ simultaneously).
\item Permutation of columns of $A$ and $B$ preserving column equivalence classes (the same permutation applied to $A$ and $B$ simultaneously).
\item Permutation of the symbols of $A$ preserving $A$-symbol equivalence classes.
\item Permutation of the symbols of $B$ preserving $B$-symbol equivalence classes.
\item Taking the transpose of $A$ and $B$ (when the row and column equivalence classes match, i.e., cases 1--3 and 5).
\item Swapping $A$ and $B$ (when the $A$-symbol and $B$-symbol equivalence classes match, i.e., cases 1 and 3--5).
\end{enumerate}

Fix an ordering of the entries of a Latin square in the following way:
the entries of the first column (from top to bottom) are first, and then the entries of the
first row (from left to right) are next.  The remaining entries can be ordered arbitrarily.
In this way, if $A$ and $A'$ are two distinct Latin squares
we say $A<A'$ if on the first entry in which $A$ and $A'$ differ, say at
index $(i,j)$, we have $A_{ij}<A'_{ij}$.
Similarly, pairs of Latin squares can be ordered
after providing an ordering on pairs of symbols.
For this, a lexicographic ordering is used: say that $(a,b)<(a',b')$ when either $a<a'$ or $a=a'$ and $b<b'$.
Then, if $(A,B)$ and $(A',B')$ are two distinct pairs of Latin squares, we say
$(A,B)<(A',B')$ if on the first entry in which $(A,B)$ and $(A',B')$
differ, say at index $(i,j)$, we have $(A_{ij},B_{ij})<(A'_{ij},B'_{ij})$.

A pair of orthogonal Latin squares $(A,B)$ is said to
be a \emph{minimal pair} if $(A,B)$
cannot be decreased under the ordering described above by using the
equivalence operations described above.
Delisle's symmetry breaking method is based on the following six propositions.
In each, $(A,B)$ is a pair of orthogonal Latin squares and each proposition
gives a necessary condition for $(A,B)$ to be a minimal pair.

\begin{proposition}\label{prop:min-trans}
If $(A,B)$ is a minimal pair then $(A_{1,0},B_{1,0})<(A_{0,1},B_{0,1})$ (except possibly in case 4).
\end{proposition}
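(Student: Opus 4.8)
The plan is to exploit the transpose operation (equivalence operation~5), which is available in every case except case~4: in case~4 the row equivalence class $[1,3,3]$ and the column equivalence class $[2,2,2]$ disagree, so the transpose is not a valid operation, which is exactly the source of the ``except possibly in case~4'' caveat. In all other cases the strategy is to compare $(A,B)$ against its transpose $(A^T,B^T)$ under the ordering on pairs of Latin squares and read off the desired inequality from the minimality of $(A,B)$.

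First I would record how the transpose acts on the fixed entry ordering. Since $(A^T)_{ij}=A_{ji}$, the first column of $A^T$ is the first row of $A$ and the first row of $A^T$ is the first column of $A$. Hence the diagonal entry at $(0,0)$ is unchanged, while the second entry of the ordering---the cell $(1,0)$---is interchanged with the cell $(0,1)$. Concretely, the first two entries of $(A^T,B^T)$ in the ordering are $(A_{0,0},B_{0,0})$ followed by $(A_{0,1},B_{0,1})$.

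Next I would invoke minimality. The pair $(A^T,B^T)$ lies in the same equivalence class as $(A,B)$ (the transpose preserves orthogonality and, when the row and column classes agree, the form of the relations), so minimality of $(A,B)$ forces $(A,B)\leq(A^T,B^T)$. The two pairs share their first entry $(A_{0,0},B_{0,0})$, so the comparison is decided at the second entry, yielding $(A_{1,0},B_{1,0})\leq(A_{0,1},B_{0,1})$.

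The step I expect to be the main (if small) obstacle is upgrading this $\leq$ to the strict inequality claimed. Equality would say $(A_{1,0},B_{1,0})=(A_{0,1},B_{0,1})$, i.e., the two distinct cells $(1,0)$ and $(0,1)$ carry the same ordered pair of symbols; but this is precisely what orthogonality of $A$ and $B$ forbids, since it requires all ordered pairs $(A_{ij},B_{ij})$ to be distinct. Thus equality cannot occur and the inequality is strict. It is worth flagging that the strictness is what gives the symmetry-breaking constraint its force, and that it follows from orthogonality rather than from the ordering itself.
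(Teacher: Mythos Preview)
Your proposal is correct and follows essentially the same approach as the paper: both arguments use the transpose operation (unavailable in case~4), observe that it fixes the $(0,0)$ entry while swapping the $(1,0)$ and $(0,1)$ entries, and invoke orthogonality to rule out equality of the pairs $(A_{1,0},B_{1,0})$ and $(A_{0,1},B_{0,1})$. The only cosmetic difference is that the paper argues by contradiction (assuming $\geq$ and producing a smaller equivalent pair) while you argue directly from $(A,B)\leq(A^T,B^T)$.
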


\begin{proof}
Suppose $(A,B)$ is a minimal pair with $(A_{1,0},B_{1,0})\geq(A_{0,1},B_{0,1})$.
Since $A$ and $B$ are orthogonal, $(A_{1,0},B_{1,0})$ and $(A_{0,1},B_{0,1})$ are distinct,
and thus $(A_{1,0},B_{1,0})>(A_{0,1},B_{0,1})$.  Applying the transpose operation to
$A$ and $B$ does not affect $A_{0,0}$ and $B_{0,0}$, but replaces $(A_{1,0},B_{1,0})$
with $(A_{0,1},B_{0,1})$, so $(A^T,B^T)<(A,B)$ in contradiction to the fact that
$(A,B)$ is minimal.  (This argument does not work in
case 4, as the transpose operation is not an equivalence operation in case 4.) \qed
\end{proof}

\begin{proposition}\label{prop:min-swap}
If $(A,B)$ is a minimal pair then $A<B$ (except possibly in case~2).
\end{proposition}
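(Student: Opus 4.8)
The plan is to mirror the proof of Proposition~\ref{prop:min-trans}, substituting the swap operation for the transpose operation. First I would argue by contradiction: assume $(A,B)$ is a minimal pair but $A\not<B$ under the single-square ordering. Since $A$ and $B$ are orthogonal they must be distinct (were $A=B$, every ordered pair $(A_{ij},B_{ij})$ would lie on the diagonal, so at most $n$ distinct pairs would occur, contradicting that orthogonality forces all $n^2$ ordered pairs to appear). Hence $A\neq B$, and together with $A\not<B$ this gives $A>B$.

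Next I would locate the first index $(i,j)$, in the fixed column-then-row order, at which $A$ and $B$ differ; by the definition of $A>B$ we have $A_{ij}>B_{ij}$ there. I would then apply the swap operation, which is a valid equivalence operation in every case except case~2 (exactly matching the stated exception), replacing $(A,B)$ with $(B,A)$.

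The crux is the comparison of $(B,A)$ with $(A,B)$ under the pair ordering. The key observation is that the ordered pair at any index $(k,\ell)$ is $(B_{k\ell},A_{k\ell})$ in $(B,A)$ and $(A_{k\ell},B_{k\ell})$ in $(A,B)$, and these agree precisely when $A_{k\ell}=B_{k\ell}$. Consequently $(B,A)$ and $(A,B)$ first disagree at the very same index $(i,j)$ where $A$ and $B$ first disagree. At that index, comparing lexicographically, the first components satisfy $B_{ij}<A_{ij}$, so $(B_{ij},A_{ij})<(A_{ij},B_{ij})$ and therefore $(B,A)<(A,B)$, contradicting the minimality of $(A,B)$.

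I expect the main subtlety to lie in the bookkeeping of the final paragraph: confirming that swapping the two squares does not move the location of the first disagreement, so that the single-square inequality $A>B$ transfers cleanly to a strict inequality under the pair ordering, and checking that the lexicographic tie-break resolves in the claimed direction. The orthogonality-forces-distinctness step and the case~2 exception are then routine to record.
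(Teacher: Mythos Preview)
Your proposal is correct and follows essentially the same approach as the paper: argue that orthogonality forces $A\neq B$, so $A>B$, then swap to obtain $(B,A)<(A,B)$ by noting that the first index of disagreement between the two pairs coincides with the first index where $A$ and $B$ differ. Your write-up is in fact slightly more detailed than the paper's (which leaves the ``same first index'' observation and the lexicographic comparison implicit), but the argument is identical in substance.
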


\begin{proof}
Suppose $(A,B)$ is a minimal pair with $A\geq B$.  Since $A$ and $B$ are orthogonal,
$A\neq B$, and thus $A>B$ and there is some entry on which $A$ and $B$ do not match.
After swapping $A$ and $B$ the first entry
on which the mismatch occurs will still be in the same place, so $(B,A)<(A,B)$
in contradiction to $(A,B)$ being minimal.
(This argument does not work in case 2, as swapping $A$ and $B$ is not an equivalence
operation in case~2.) \qed
\end{proof}

\begin{proposition}\label{prop:sort-col}
If $(A,B)$ is a minimal pair then the
symbols in the first column of\/ $A$ appear in sorted order within the rows of each row equivalence class.
\end{proposition}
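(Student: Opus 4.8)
The plan is to argue by contradiction, exploiting the fact that the first column is the highest-priority block of entries in the ordering defined above, together with the first of the six equivalence operations (permutation of rows preserving the row equivalence classes). The key structural observation is that, because $A$ is a Latin square, the entries $A_{0,0},A_{1,0},\dots,A_{9,0}$ of its first column are pairwise distinct and hence form a permutation of $\{0,\dots,9\}$; this distinctness is what lets a comparison of pairs collapse to a comparison of $A$-values.

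First I would fix a single row equivalence class, say the rows whose indices lie in a consecutive interval $[a,b)$, and recall that any permutation of the rows of $A$ and $B$ fixing this interval setwise (and fixing every other class) is an instance of the first equivalence operation, so it preserves orthogonality and the form of the relations and yields a legitimate competitor under the ordering. Such a permutation rearranges the rows indexed by $[a,b)$; among the entries that matter earliest in the ordering it rearranges exactly the first-column entries $(A_{i,0},B_{i,0})$ for $i\in[a,b)$. Next I would show that arranging these so that the $A$-values increase produces a first column that is lexicographically no larger than the original, and strictly smaller precisely when the class was not already sorted: since the $A$-values in the first column are distinct, at the first position where the sorted and original sequences disagree the two pairs are already distinguished by their $A$-components, so the lexicographic comparison of pairs reduces to comparison of $A$-values, for which an ascending sort is the unique lex-minimal arrangement.

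Putting these pieces together, suppose the $A$-values in the first column failed to be sorted within some class. Sorting that one class produces a pair $(A',B')$ whose first column is strictly smaller in the lexicographic order, while leaving every first-column entry before that class unchanged. Because all first-column entries precede the first-row entries and the remaining entries in the global ordering, this forces $(A',B')<(A,B)$, contradicting the minimality of $(A,B)$. Hence within every row equivalence class the first column of $A$ is sorted, as claimed.

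The step needing the most care—and the place a hasty argument could break—is that a row permutation within the class containing row $0$ also disturbs the first row, which is the next block in the ordering. I would stress that this is harmless: the first-column block strictly dominates the first-row block, so any strict decrease realized in the first column outranks whatever happens to the first row. This is exactly why the statement is phrased for the first column of $A$ rather than for the pairs as a whole, and why—unlike Propositions~\ref{prop:min-trans} and~\ref{prop:min-swap}—no case exception is required, since permutation of rows within equivalence classes is an available equivalence operation in all five cases.
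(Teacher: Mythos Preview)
Your proposal is correct and follows essentially the same approach as the paper: argue by contradiction, apply a row permutation within equivalence classes to sort the first-column $A$-entries, and observe that the first position where anything changes must witness a strict decrease, contradicting minimality. Your additional remarks---that distinctness of the first-column $A$-entries lets the pair comparison collapse to an $A$-comparison, and that any disturbance to the first row is dominated by the first-column block---are sound elaborations not spelled out in the paper's terser version, but the underlying argument is the same.
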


\begin{proof}
Suppose $(A,B)$ is a minimal pair with the symbols in the first column of~$A$
not in sorted order within the rows of each row equivalence class.  Using row permutations,
permute the rows of $(A,B)$ to form $(A',B')$ such that the rows of the first column of $A'$
are now sorted within the rows of each row equivalence class.
This is possible because we can freely permute rows of $A$ and~$B$ if we focus
only on the rows in one particular row equivalence class.  Thus, there is
some permutation which sorts those particular rows by the entries of the first column of $A$.
Consider the first entry $A'_{i,0}$ of $A'$ that has changed after applying these permutations.
(This will also be the first entry of $B'$ that has changed, since the same permutations
are applied to $A$ and $B$.)  Because $A'_{i,0}<A_{i,0}$ and $(A_{j,0}',B'_{j,0})=(A_{j,0},B_{j,0})$ for all $j<i$ we have $(A',B')<(A,B)$
in contradiction to $(A,B)$ being minimal. \qed
\end{proof}

\begin{proposition}\label{prop:sort-row}
If $(A,B)$ is a minimal pair then the
symbols in the first row of\/ $A$ appear in sorted order within the columns of each column equivalence class
(except possibly in case 4; in that case $A_{0,0}$ and $A_{0,1}$ may appear out of order).
\end{proposition}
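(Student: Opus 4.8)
The plan is to mirror the proof of Proposition~\ref{prop:sort-col}, replacing row permutations with column permutations: assuming the first row of $A$ is unsorted within some column equivalence class, I would permute the columns within that class so that the first-row entries become sorted, and then argue that the resulting pair $(A',B')$ is strictly smaller. The essential new difficulty, and the reason the statement is not a verbatim transpose of Proposition~\ref{prop:sort-col}, is that the first column has higher priority than the first row in the chosen ordering, so a column permutation that moves column~$0$ can disturb the first column. I would therefore split the argument according to whether the offending column class contains column~$0$.

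If the unsorted class does not contain column~$0$, every permuted column has index at least $x_2\ge 1$, so column~$0$ is fixed and the entire first column is unchanged. The first entry that changes then lies in the first row, and sorting makes its $A$-value strictly smaller, so the first differing entry of the pair decreases and $(A',B')<(A,B)$, contradicting minimality; this disposes of all later classes in every one of the five cases. For the class $[0,x_2)$ containing column~$0$ there is nothing to prove when $x_2=1$ (cases~1--3), so the only remaining situation is $x_2=2$ (cases~4 and~5), where the class is $\{0,1\}$ and sortedness reduces to the single inequality $A_{0,0}<A_{0,1}$ (the two entries being distinct, as they lie in the same row of a Latin square).

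The crux is to establish $A_{0,0}<A_{0,1}$ \emph{without} moving column~$0$, and I would do this by chaining two earlier results. In case~5 we have $x_1=2$, so rows~$0$ and~$1$ lie in the same row equivalence class and Proposition~\ref{prop:sort-col} forces $A_{0,0}<A_{1,0}$; moreover Proposition~\ref{prop:min-trans} applies (case~5 is not case~4) and yields $A_{1,0}\le A_{0,1}$, whence $A_{0,0}<A_{1,0}\le A_{0,1}$. The main obstacle is precisely this first column class: in case~4 both supporting facts collapse, since $x_1=1$ leaves row~$0$ alone in its equivalence class so Proposition~\ref{prop:sort-col} gives no relation between $A_{0,0}$ and $A_{1,0}$, and the transpose is not an equivalence operation so Proposition~\ref{prop:min-trans} is unavailable. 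With neither inequality at hand, nothing forces $A_{0,0}<A_{0,1}$ in case~4, which is exactly why the proposition must permit $A_{0,0}$ and $A_{0,1}$ to appear out of order there.
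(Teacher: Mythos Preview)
Your proposal is correct and follows essentially the same route as the paper: handle every column class not containing column~$0$ by a direct column-permutation argument (the first column stays fixed, so the first differing entry lies in row~$0$ and strictly decreases), and then for the first class in cases~4 and~5 chain Proposition~\ref{prop:sort-col} with Proposition~\ref{prop:min-trans} to obtain $A_{0,0}<A_{1,0}\le A_{0,1}$ in case~5, while noting both ingredients fail in case~4. Your write-up is in fact a bit more explicit than the paper's about \emph{why} case~4 resists this chaining.
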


\begin{proof}
Suppose $(A,B)$ is a minimal pair with the symbols in the first row of $A$
not in sorted order within the columns of each column equivalence class.
First consider cases 1--3, in which case the first column equivalence class
consists of only the first column.  In this case, the proposition is vacuous for the
first column equivalence class, as the list $[A_{0,0}]$ has length 1 and
is vacuously sorted.  Using column permutations, permute the columns of $(A,B)$
to form $(A',B')$ such that the columns of the first row of $A'$ are now
sorted within the columns of the remaining three column equivalence classes.
Note that the first column of $(A',B')$ matches the first column of $(A,B)$
since the first column was not permuted.  Consider the first entry $A'_{0,i}$
that has changed after applying these permutations.  Because $A'_{0,i}<A_{0,i}$
and $(0,i)$ is the first entry in which $(A',B')$ differs from $(A,B)$, it follows
that $(A',B')<(A,B)$, in contradiction to $(A,B)$ being minimal.

In cases 4 and 5, the above argument works to sort the entries in the first row of
$A$ in each of the last three column equivalence classes, but not the first, so
$A_{0,0}$ and $A_{0,1}$ may be out of order.  However, in case 5, since $(A,B)$
is a minimal pair whose first two rows are in the same row equivalence class,
by Proposition~\ref{prop:sort-col} we have $A_{0,0}<A_{1,0}$.  By Proposition~\ref{prop:min-trans},
we also have $A_{1,0}\leq A_{0,1}$.  Thus in case 5
the entries of $A_{0,0}$ and $A_{0,1}$ will also appear in sorted order. \qed
\end{proof}

\begin{proposition}\label{prop:min-a-sym}
If $(A,B)$ is a minimal pair then for each $A$-symbol equivalence class, the
symbols of that equivalence class in the first column of\/ $A$ appear in sorted order.
\end{proposition}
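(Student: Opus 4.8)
The plan is to follow the template of the proof of Proposition~\ref{prop:sort-col}, but to replace the row-permutation operation by the symbol-permutation operation on $A$ (equivalence operation~3: permutation of the symbols of $A$ preserving the $A$-symbol equivalence classes). The key observation enabling this is that since $A$ is a Latin square, its first column is a permutation of all ten symbols; hence every symbol, and in particular every symbol of a given $A$-symbol equivalence class $C$, occurs exactly once in the first column. Consequently, ``sorting the symbols of $C$ appearing in the first column'' can be realized by a permutation of the symbols within $C$, which by operation~3 is always an equivalence operation and which alters only $A$, leaving $B$ untouched.

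I would argue by contradiction. Suppose $(A,B)$ is minimal but for some $A$-symbol equivalence class $C$ the symbols of $C$ in the first column of $A$ are not in sorted order. Let $i_1<\cdots<i_m$ be the row indices at which symbols of $C$ occur in the first column, so that $\{A_{i_1,0},\dots,A_{i_m,0}\}=C$. Define a permutation $\sigma$ of the symbols by sending $A_{i_k,0}$ to the $k$th smallest element of $C$ and fixing every symbol outside $C$, and let $A'$ be the square obtained by relabeling $A$ according to $\sigma$. By construction the symbols of $C$ now appear in increasing order down the first column of $A'$, and $\sigma$ is nontrivial precisely because the class was assumed not already sorted.

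Next I would locate the first entry at which $A'$ differs from $A$ and show it strictly decreases. Scanning the first column from the top, every position before $i_1$ lies outside $C$ and is unchanged; more generally, if $i_{k^*}$ is the first index at which $\sigma$ moves a value, then the earlier $C$-positions $i_1,\dots,i_{k^*-1}$ are fixed and therefore already carry the $k^*-1$ smallest elements of $C$ in order, so $A_{i_{k^*},0}$ is one of the larger remaining elements of $C$, whereas $\sigma$ replaces it with the $k^*$th smallest element of $C$; thus $A'_{i_{k^*},0}<A_{i_{k^*},0}$. Since all earlier entries in the column-then-row ordering are unchanged and $B$ is untouched, the pairs $(A'_{\cdot,\cdot},B_{\cdot,\cdot})$ and $(A_{\cdot,\cdot},B_{\cdot,\cdot})$ agree up to position $(i_{k^*},0)$ and differ only in the $A$-coordinate there, whence $(A',B)<(A,B)$, contradicting minimality. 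This contradiction establishes the claim for every $A$-symbol equivalence class.

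The main obstacle I anticipate is the bookkeeping in the third step: confirming that the \emph{first} altered first-column entry is necessarily decreased rather than increased, which is the monotonicity fact that the proof of Proposition~\ref{prop:sort-col} took for granted. It rests on the observation that relabeling a class to sorted order can only pull a value down at the first position where it was previously out of order. The remaining points are then immediate: operation~3 is an equivalence operation in all five cases (so no case analysis is needed), and because $\sigma$ moves a symbol in the first column the first discrepancy always occurs there, dominating the lexicographic comparison before any change in the first row.
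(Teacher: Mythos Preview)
Your proposal is correct and follows essentially the same approach as the paper's proof: contradict minimality by applying a symbol permutation on $A$ (equivalence operation~3) that sorts the offending class in the first column, and observe that the first altered entry strictly decreases while $B$ is unchanged. If anything, you supply more justification than the paper does---the paper simply asserts ``Since $A'_{i,0}<A_{i,0}$'' without spelling out the monotonicity argument you work through in your third paragraph.
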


\begin{proof}
Suppose $(A,B)$ is a minimal pair where the symbols in the same $A$-symbol equivalence
class in the first column of $A$ do not appear in sorted order.
Permute the symbols of $A$ to form $A'$ so that for each $A$-symbol equivalence class
the symbols in that equivalence class in the first column of $A'$ appear in sorted order.
Consider the first entry $A'_{i,0}$ that differs from $A_{i,0}$.  Since $A'_{i,0}<A_{i,0}$
and $A'_{j,0}=A_{j,0}$ for all $j<i$, we have $(A',B)<(A,B)$ in contradiction to $(A,B)$
being minimal. \qed
\end{proof}

\begin{proposition}\label{prop:min-b-sym}
If $(A,B)$ is a minimal pair then for each $B$-symbol equivalence class, the
symbols of that equivalence class in the first column of\/ $B$ appear in sorted order.
\end{proposition}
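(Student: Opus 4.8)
The plan is to mirror the proof of Proposition~\ref{prop:min-a-sym} almost verbatim, replacing the third equivalence operation (permutation of the symbols of $A$) with the fourth (permutation of the symbols of $B$), which is available in every case and so yields a proposition with no case exceptions. Suppose for contradiction that $(A,B)$ is a minimal pair in which, for some $B$-symbol equivalence class, the symbols of that class in the first column of $B$ do not appear in sorted order. I would then permute the symbols of $B$, preserving the $B$-symbol equivalence classes, to form a square $B'$ in which, within every $B$-symbol equivalence class, the symbols appearing in the first column of $B'$ are arranged in increasing order from top to bottom. Because this operation relabels only the symbols of $B$, the square $A$ is left completely unchanged, so the resulting pair is $(A,B')$.

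Next I would locate the first row index $i$ at which $B'_{i,0}\neq B_{i,0}$, so that $B'_{j,0}=B_{j,0}$ for all $j<i$. Since the first column of a Latin square contains each symbol exactly once, the values appearing in the first column within a fixed $B$-symbol class are distinct, and sorting them amounts to assigning to each successive occurrence the smallest symbol of that class not yet placed above it. At row $i$ the original symbol $B_{i,0}$ lies in some $B$-symbol class $C$, and every occurrence of $C$ in rows above $i$ is unchanged and already in sorted order; hence $B'_{i,0}$ is the smallest symbol of $C$ not used in those rows. As $B'_{i,0}\neq B_{i,0}$, this forces $B'_{i,0}<B_{i,0}$.

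Finally I would conclude using the lexicographic ordering on pairs of Latin squares. The entry at index $(i,0)$ lies in the first column, which is compared before the first row and all remaining entries; moreover $(A_{j,0},B'_{j,0})=(A_{j,0},B_{j,0})$ for every $j<i$. At index $(i,0)$ the first components agree, since $A$ is unchanged, while the second components satisfy $B'_{i,0}<B_{i,0}$, so $(A_{i,0},B'_{i,0})<(A_{i,0},B_{i,0})$. Hence $(A,B')<(A,B)$, contradicting the minimality of $(A,B)$, and the proof is complete.

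The step I expect to require the most care is the last one: because the ordering on pairs compares the $A$-component first, permuting only the symbols of $B$ alters the comparison key only at the subordinate level. I would need to confirm both that the first altered entry necessarily lies in the first column---which holds because our nonsortedness assumption forces the sorting permutation to be nontrivial on the first column, and first-column entries precede all others in the fixed ordering---and that $A$ is genuinely untouched there, so that the tie in the $A$-component passes the decision to the strictly decreased $B$-component. Once this is verified, the argument is otherwise a routine transcription of Proposition~\ref{prop:min-a-sym}.
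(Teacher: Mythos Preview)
Your proof is correct and follows essentially the same approach as the paper's own argument: assume the first column of $B$ is not sorted within some $B$-symbol class, apply a class-preserving permutation of the symbols of $B$ to sort it, locate the first changed entry $(i,0)$, and observe $(A,B')<(A,B)$ since $A$ is unchanged and $B'_{i,0}<B_{i,0}$. Your write-up is in fact more careful than the paper's in justifying why $B'_{i,0}<B_{i,0}$ and why the lexicographic comparison on pairs goes through with the $A$-component fixed.
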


\begin{proof}
Suppose $(A,B)$ is a minimal pair where the symbols in the same $B$-symbol equivalence
class in the first column of $B$ do not appear in sorted order.
Permute the symbols of $B$ to form $B'$ so that for each $B$-symbol equivalence class
the symbols in that equivalence class in the first column of $B'$ appear in sorted order.
Consider the first entry $B'_{i,0}$ that differs from $B_{i,0}$.  Since $B'_{i,0}<B_{i,0}$
and $B'_{j,0}=B_{j,0}$ for all $j<i$, we have $(A,B')<(A,B)$ in contradiction to $(A,B)$
being minimal. \qed
\end{proof}

Finally, we prove another property of minimal pairs that we exploit in our encoding.

\begin{proposition}\label{prop:a00b00}
If $(A,B)$ is a minimal pair then $A_{0,0}=B_{0,0}$ (except possibly in case 2).
\end{proposition}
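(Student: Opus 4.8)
The plan is to show that if $(A,B)$ is a minimal pair, then the top-left entries must satisfy $A_{0,0} = B_{0,0}$ by using the symbol-permutation equivalence operations together with the ordering-based minimality condition. The key observation is that operations~3 and~4 (permuting the symbols of $A$ and of $B$ respectively, preserving their symbol equivalence classes) leave the \emph{positions} of entries fixed but relabel the symbols appearing in them. Since $(0,0)$ is the very first position in the ordering, any relabeling that decreases $A_{0,0}$ or $B_{0,0}$ without disturbing earlier entries would contradict minimality. I would therefore argue that minimality forces both $A_{0,0}$ and $B_{0,0}$ to be the smallest symbol available in their respective symbol equivalence classes, and then reconcile these two facts.

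Concretely, first I would note that in the standard setup the symbol~$0$ belongs to the first $A$-symbol equivalence class and also to the first $B$-symbol equivalence class, since each symbol equivalence class is an interval starting at~$0$. Because $(0,0)$ is the first position in the entry ordering, Proposition~\ref{prop:min-a-sym} applied at position $(0,0)$ (or a direct minimality argument) shows that $A_{0,0}$ must be the least symbol in its $A$-symbol equivalence class: otherwise we could apply a symbol permutation of $A$ to replace $A_{0,0}$ by a smaller symbol in the same class, which decreases the pair at its very first entry and contradicts minimality. The same reasoning with operation~4 forces $B_{0,0}$ to be the least symbol in its $B$-symbol equivalence class. In the relevant cases, the first equivalence class contains~$0$, so both $A_{0,0}$ and $B_{0,0}$ equal~$0$, giving $A_{0,0}=B_{0,0}$.

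The main obstacle, and the place where case~2 is excepted, is justifying that a symbol permutation decreasing $A_{0,0}$ can be chosen so that it does not illegitimately disturb the comparison via the other coordinate $B_{0,0}$ in the lexicographic pair order. Since the ordering on pairs is lexicographic with $A$ compared first, decreasing $A_{0,0}$ already wins the comparison regardless of what happens to $B_{0,0}$, so operation~3 suffices to pin down $A_{0,0}$; the subtlety is that to pin down $B_{0,0}$ we want to decrease the $B$-component \emph{without} increasing the $A$-component at position $(0,0)$, and operation~4 does exactly this since it touches only the symbols of $B$. The exception in case~2 arises precisely because the available swap/symbol operations there do not let us independently force both minima to coincide at~$0$ (the $A$- and $B$-symbol equivalence classes need not be compatible in the way cases~1 and~3--5 require, mirroring the exception in Proposition~\ref{prop:min-swap}). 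I would therefore phrase the argument for cases~1 and~3--5, invoke operations~3 and~4 to conclude $A_{0,0}=B_{0,0}=0$, and explicitly flag that case~2 is excluded because the relevant symbol equivalence structure fails to guarantee this coincidence.
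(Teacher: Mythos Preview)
Your argument has a genuine gap at the step where you conclude $A_{0,0}=B_{0,0}=0$. You correctly observe that operation~3 forces $A_{0,0}$ to be the least symbol in \emph{its} $A$-symbol equivalence class, and similarly for $B_{0,0}$. But operation~3 only permutes symbols \emph{within} an equivalence class; it cannot move $A_{0,0}$ from one class to another. Nothing in your argument shows that $A_{0,0}$ lies in the \emph{first} $A$-symbol equivalence class. In case~1, for instance, the $A$-symbol classes are $\{0\},\{1,2,3\},\{4,5,6\},\{7,8,9\}$, and your argument only yields $A_{0,0}\in\{0,1,4,7\}$ and $B_{0,0}\in\{0,1,4,7\}$; a priori the pair $(A_{0,0},B_{0,0})=(1,4)$, say, is not excluded. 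Row and column permutations do not help either (in case~1 the first row and first column are each singleton classes, so the $(0,0)$ cell is fixed), and neither transpose nor swap can change which symbol class $A_{0,0}$ belongs to.

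The missing ingredient is the relation constraint itself. The paper's proof combines the observation $A_{0,0},B_{0,0}\in\{0,1,4,7\}$ (case~1) or $\{0,2,4,6\}$ (cases~3--5) with the list of admissible pairs $(A_{0,0},B_{0,0})$ coming from $\RC(0,0)=\ST(A_{0,0},B_{0,0})$, as tabulated by Delisle. Since row~0 and column~0 both lie in $R_1\cap R_2$ in all five cases, one needs $\ST(A_{0,0},B_{0,0})=0$, which forces the two symbols to lie in matching relational classes; intersecting this with the two ``minimum of its class'' sets leaves only the diagonal pairs $(0,0),(1,1),(4,4),(7,7)$ (case~1) or $(0,0),(2,2),(4,4),(6,6)$ (cases~3--5). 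Note that the conclusion is $A_{0,0}=B_{0,0}$, not $A_{0,0}=B_{0,0}=0$; the latter is in general false. This also explains the case~2 exception concretely: there the $A$- and $B$-symbol class structures are $[1,3,3]$ and $[2,2,2]$, so the class minima are $\{0,1,4,7\}$ and $\{0,2,4,6\}$, and the admissible diagonal-code pairs include $(1,2)$ and $(7,6)$, for which $A_{0,0}\neq B_{0,0}$.
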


\begin{proof}
By Proposition~\ref{prop:min-a-sym} if $(A,B)$ is a minimal pair then $A_{0,0}$
must be in $\{0,1,4,7\}$ in case 1 and in $\{0,2,4,6\}$ in cases 3--5.  Similarly, by
Proposition~\ref{prop:min-b-sym} if $(A,B)$ is a minimal pair then $B_{0,0}$ must
be in $\{0,1,4,7\}$ in case 1 and in $\{0,2,4,6\}$ in cases~3--5.
Delisle~\cite[pg.~18]{delisle2010search} gives the possibilities
for the values of $(A_{0,0},B_{0,0})$ in case~1, and the only ones
which are in $\{0,1,4,7\}\times\{0,1,4,7\}$ are $(0,0)$, $(1,1)$, $(4,4)$, and $(7,7)$.
Similarly, \cite[pg.~20]{delisle2010search} gives the possibilities for the values of $(A_{0,0},B_{0,0})$ in case~3
(and these are identical in cases~4 and~5),
and the only ones which are in $\{0,2,4,6\}\times\{0,2,4,6\}$ are
$(0,0)$, $(2,2)$, $(4,4)$, and $(6,6)$. \qed
\end{proof}

\section{SAT Encoding}\label{sec:encoding}

In this section we describe our SAT encoding for the problem
of enumerating $\MOLS{2}{10}$ with two nontrivial relations.
In order to encode a pair of orthogonal Latin squares $(A,B)$
of order $n$ we use the $2n^3$ Boolean variables $A_{ijk}$ and
$B_{ijk}$ for $0\leq i<n$.  The variable $A_{ijk}$ will be true exactly when
the $(i,j)$th entry of square $A$ contains the symbol~$k$, and
similarly for the variables $B_{ijk}$ and the entries of the square $B$.

Modern SAT solvers require the input formulae to be in a format
known as conjunctive normal form.  An expression in Boolean logic
is in \emph{conjunctive normal form} when it is a conjunction of clauses,
a \emph{clause} being a disjunction of variables or negated variables.
For example, $\lnot x\lor y\lor z$ is a clause.
We may use the implication operator to express clauses
with the meaning that $(x_1\land\dotsb\land x_n)\limp(y_1\lor\dotsb\lor y_m)$
is shorthand for the clause $\lnot x_1\lor\dotsb\lor\lnot x_n\lor y_1\lor\dotsb\lor y_m$.

Our SAT encoding contains four kinds of constraints:
constraints asserting that $A$ and $B$ are Latin squares (see Section~\ref{sec:latin-enc}),
constraints asserting that $A$ and $B$ are orthogonal (see Section~\ref{sec:ortho-enc}),
constraints asserting that $A$ and $B$ have two nontrivial relations
and are in one of the forms specified by cases 1--5 (see Section~\ref{sec:relation-enc}),
and finally symmetry breaking constraints asserting that $A$ and $B$ satisfy
Propositions~\ref{prop:min-trans} to~\ref{prop:min-b-sym} (see Section~\ref{sec:symbreak-enc}).

\subsection{Latin square encoding}\label{sec:latin-enc}

Considering the Boolean variables $A_{ijk}$ as integers ($0$ for false
and $1$ for true), in order to specify that they encode a Latin square of order 10 we
need to enforce the following three constraints.
\begin{enumerate}
\item $\sum_{k=0}^{9} A_{ijk}=1$ for all $0\leq i,j\leq 9$ (every cell has exactly one symbol).
\item $\sum_{j=0}^{9} A_{ijk}=1$ for all $0\leq i,k\leq 9$ (every row contains every symbol exactly once).
\item $\sum_{i=0}^{9} A_{ijk}=1$ for all $0\leq j,k\leq 9$ (every column contains every symbol exactly once).
\end{enumerate}
The most straightforward
way of representing the constraint $\sum_{i=1}^{n}x_i=1$ in Boolean logic is to encode
$\sum_{i=1}^{n}x_i\leq 1$ via $\bigwedge_{i<j}(\lnot x_i\lor\lnot x_j)$
and $\sum_{i=1}^{n}x_i\geq 1$ via $\bigvee_{i=1}^{n} x_i$.
This encoding performed well in our experiments, but
we observed slightly better performance using Sinz's sequential counter encoding~\cite{sinz2005towards}.

To encode $\sum_{i=1}^n x_i\leq 1$ in the sequential counter encoding, first
the new variables $s_1$, $\dotsc$, $s_n$ are introduced ($s_i$ representing
that at least one of $x_1$, $\dotsc$, $x_i$ are true) using the $2n-1$ clauses
\begin{equation} x_i\limp s_i \quad\text{and}\quad s_{i-1}\limp s_i \end{equation}
for $1\leq i\leq n$ (when $i=1$ the clause $s_0\limp s_1$ is skipped).
Once the $s_i$ variables have been introduced, $\sum_{i=1}^n x_i\leq 1$ is encoded
using the $n-1$ additional clauses $\lnot x_i\lor\lnot s_{i-1}$ for $2\leq i\leq n$,
the idea being that $x_i$ and $s_{i-1}$ can never both be true, because that would imply
at least two variables in $x_1$, $\dotsc$, $x_n$ are true.
Moreover, $\sum_{i=1}^n x_i\geq 1$ can be encoded by setting $s_n$ to true and
adding the clauses $s_i\limp(s_{i-1}\lor x_i)$ for $1\leq i\leq n$ (when $i=1$
the literal $s_0$ is left out).
Setting $s_n$ to true causes two clauses to be trivially satisfied, so they can be removed.
Altogether, we encode $\sum_{i=1}^n x_i=1$ using $4n-4$ clauses.

\subsection{Orthogonality encoding}\label{sec:ortho-enc}

The orthogonality
of two Latin squares $A$ and $B$ of order~$n$ can be specified by the logical constraints
\begin{equation} (A_{ij}=k\land A_{i'j'}=k\land B_{ij}=l\land B_{i'j'}=l)\limp(i=i') \end{equation}
for $0\leq i,j,i',j',k,l<n$ (cf.~Zhang~\cite[Lemma~1]{zhang1997specifying}).
Taking the contrapositive and writing
this using the variables $A_{ijk}$ and $B_{ijk}$, this is equivalent
to the clauses
$\lnot A_{ijk}\lor\lnot A_{i'j'k}\lor\lnot B_{ijl}\lor\lnot B_{i'j'l}$
where $0\leq i,j,i',j',k,l<n$ with $i\neq i'$.  This encoding
of orthogonality uses $O(n^6)$ clauses of length 4.  An alternative
encoding of orthogonality that performs better in practice~\cite[Lemma 2]{zhang1997specifying}
uses $O(n^4)$ clauses of length 3 and $n^3$ new auxiliary variables.  To describe
this orthogonality encoding, we follow the derivation of Bright, Keita, and Stevens~\cite{myrvoldmols}
based on a composition square.

Consider the rows of a Latin square $X$ of order~$n$ as a collection of $n$ permutations of
the symbols $\{0,\dotsc,n-1\}$.  The row inverse square $X^{-1}$ is defined to be the Latin square whose rows are formed
by the inverses of the rows of $X$, and the composition square $XY$ is defined to be the square whose $i$th
row is the $i$th row of $X$ composed with the $i$th row of $Y$ (in a right-to-left way).
Note that the square $XY$ is \emph{not} a Latin square in general.
We now provide a theorem that the orthogonality encoding we use relies on.
\begin{theorem}[Mann~\cite{mann1942construction}]
Two Latin squares $A$ and $B$ are orthogonal if and only if\/ $AB^{-1}$ is a Latin square.
\end{theorem}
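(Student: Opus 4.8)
The plan is to prove Mann's theorem by characterizing orthogonality in terms of the composition square $AB^{-1}$ and showing that this square is Latin precisely when the original pair is orthogonal. The key observation is that two Latin squares $A$ and $B$ are orthogonal if and only if, as $(i,j)$ ranges over all cells, the ordered pairs $(A_{ij}, B_{ij})$ exhaust all $n^2$ possible symbol pairs exactly once. I would translate this condition into a statement about the entries of $C := AB^{-1}$ and verify the two directions of the biconditional.

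First I would set up notation using the permutation viewpoint already introduced in the excerpt: regard the $i$th row of $A$ as a permutation $\alpha_i$ and the $i$th row of $B$ as a permutation $\beta_i$, so that $A_{ij} = \alpha_i(j)$ and $B_{ij} = \beta_i(j)$. The row inverse square has $i$th row $\beta_i^{-1}$, and by definition the $i$th row of the composition square $C = AB^{-1}$ is $\alpha_i \circ \beta_i^{-1}$, giving $C_{ik} = \alpha_i(\beta_i^{-1}(k))$. I would then observe that $C$ is automatically a Latin square in its rows (each row is a composition of permutations, hence a permutation), so the only content in ``$C$ is a Latin square'' is the \emph{column} condition: each symbol appears exactly once in each column of $C$.

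Next I would connect the column condition on $C$ to orthogonality of $A$ and $B$. The forward direction: suppose $A$ and $B$ are orthogonal, so the map $(i,j) \mapsto (A_{ij}, B_{ij}) = (\alpha_i(j), \beta_i(j))$ is a bijection onto $\{0,\dots,n-1\}^2$. Fix a column index $l$ and a row index $k$ of $C$; I want to show $C_{ik} = l$ for exactly one $i$. Now $C_{ik} = l$ means $\alpha_i(\beta_i^{-1}(k)) = l$, i.e.\ setting $j = \beta_i^{-1}(k)$ we have $\beta_i(j) = k$ and $\alpha_i(j) = l$, equivalently $(A_{ij}, B_{ij}) = (l, k)$. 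By orthogonality there is exactly one cell $(i,j)$ with symbol pair $(l,k)$, and this $i$ is uniquely determined (with $j = \beta_i^{-1}(k)$), so symbol $l$ occurs exactly once in column $k$ of $C$; hence $C$ is Latin. The reverse direction runs the same equivalence backward: if $C$ is Latin then for each pair $(l,k)$ there is a unique $i$ with $C_{ik}=l$, which produces a unique cell $(i,j)$ realizing the symbol pair $(l,k)$, so all $n^2$ pairs occur exactly once and $A,B$ are orthogonal.

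\textbf{The main obstacle} is purely bookkeeping: keeping the indices straight through the substitution $j = \beta_i^{-1}(k)$ and making sure the correspondence between ``cell $(i,j)$ with symbol pair $(l,k)$'' and ``entry $C_{ik} = l$'' is genuinely a bijection rather than merely a map in one direction. Once the identity $C_{ik} = l \iff (A_{ij}, B_{ij}) = (l, k)$ with $j = \beta_i^{-1}(k)$ is nailed down, both directions follow from the same counting equivalence, so the proof is short. I expect no deep difficulty beyond verifying that the row-permutation structure makes $AB^{-1}$ well defined and that the column condition is the exact image of the orthogonality condition under this correspondence.
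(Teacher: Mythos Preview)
The paper does not actually prove this theorem; it is simply quoted from Mann's 1942 paper as a known result used to justify the composition-square orthogonality encoding. Your proposal is a correct and standard proof: the key identity $C_{ik}=l \iff (A_{ij},B_{ij})=(l,k)$ with $j=\beta_i^{-1}(k)$ sets up exactly the bijection needed, and your observation that the row condition on $C$ is automatic (so only the column condition carries content) is the right way to organize the argument.
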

Let $Z$ denote the composition square $AB^{-1}$, and let the Boolean variables
$Z_{ijk}$ be true exactly when the $(i,j)$ entry of $Z$ contains the symbol~$k$
(where $0\leq i,j,k<n$).  The square $Z$ can be specified to be a Latin square using the encoding
from Section~\ref{sec:latin-enc}\@.  In order to encode $Z=AB^{-1}$, we need to enforce
that the $(i,B_{ij})$th entry of $Z$ contains the symbol $A_{ij}$.  This is done using the clauses
\begin{equation} (A_{ijk}\land B_{ijl})\limp Z_{ilk} \end{equation}
for $0\leq i,j,k,l<n$.  In fact, from $A=ZB$ we also derive the similar clause
$(Z_{ilk}\land B_{ijl})\limp A_{ijk}$, and from $B=Z^{-1}A$ we derive
$(Z_{ilk}\land A_{ijk})\limp B_{ijl}$.
These last two types of clauses are logically redundant, but in practice
they improve the performance of the SAT solver as they allow the solver to make
additional useful propagations.

\subsection{Relation encoding}\label{sec:relation-enc}

Let $R_1$ and denote the indices
of the lines in the first relation, and let $R_2$ denote the indices
of the lines in the second relation.
Delisle~\cite{delisle2010search} defines an equivalence class on $(\text{row}, \text{column})$  Latin square index pairs
using a labelling function called $\RC$ that we describe below.
In the following, $i$ represents
a Latin square row index, and~$j$ represents a Latin square column index, so $0\leq i,j\leq9$.
Recall that we order the lines of a 4-net(10) so that lines 0 to 9 correspond to row indices of Latin squares
while lines 10 to 19 correspond to column indices of Latin squares.
Under such an ordering, note that line $j+10$ corresponds to the $j$th column of the Latin squares.
In what follows
the notation $x\liff y$ means $x$ and $y$ have the same truth value (both
true or both false), while $x\nliff y$ means $x$ and $y$ take opposite truth values.
Delisle's $\RC(i,j)$ labelling function is now defined by
\begin{equation}
\begin{cases}
0 & \text{if ($i\in R_1\liff j+10\in R_1$) and ($i\in R_2\liff j+10\in R_2$)}, \\
1 & \text{if ($i\in R_1\liff j+10\in R_1$) and ($i\in R_2\nliff j+10\in R_2$)}, \\
2 & \text{if ($i\in R_1\nliff j+10\in R_1$) and ($i\in R_2\liff j+10\in R_2$)}, \\
3 & \text{if ($i\in R_1\nliff j+10\in R_1$) and ($i\in R_2\nliff j+10\in R_2$)}.
\end{cases}
\end{equation}
Similarly, Delisle defines a labelling function $\ST$ on symbol pairs
$(s,t)$ where $s$ is a symbol of the first Latin square $A$ and $t$ is a symbol of the second Latin
square~$B$.
Representing $s$ and $t$ as integers in $\{0,\dotsc,9\}$,
note that line $s+20$ of the net corresponds to symbol~$s$ in the first Latin square,
and line $t+30$ of the net corresponds to symbol~$t$ in the second Latin square.
Concretely, $\ST(s,t)$ is defined
to be
\begin{equation}
\begin{cases}
0 & \text{if ($s+20\in R_1\liff t+30\in R_1$) and ($s+20\in R_2\liff t+30\in R_2$)}, \\
1 & \text{if ($s+20\in R_1\liff t+30\in R_1$) and ($s+20\in R_2\nliff t+30\in R_2$)}, \\
2 & \text{if ($s+20\in R_1\nliff t+30\in R_1$) and ($s+20\in R_2\liff t+30\in R_2$)}, \\
3 & \text{if ($s+20\in R_1\nliff t+30\in R_1$) and ($s+20\in R_2\nliff t+30\in R_2$)}.
\end{cases}
\end{equation}
Delisle then notes that the condition that relations $R_1$ and $R_2$ exist in the 4-net(10)
corresponding to the orthogonal Latin pair $(A,B)$
is equivalent to the condition
\begin{equation} \RC(i,j)=\ST(A_{ij},B_{ij}) \text{ for all $0\leq i,j\leq9$}. \end{equation}
That is, if $(A_{ij},B_{ij})=(s,t)$ then $\RC(i,j)=\ST(s,t)$.
We encode this condition directly into our SAT encoding.
Each case~1--5 is encoded separately, so the forms of $R_1$
and $R_2$ are known, meaning that the values of $\RC(i,j)$ and $\ST(s,t)$
can be determined in advance for all $0\leq i,j,s,t\leq9$.
We encode the relation constraint in contrapositive form:
for all $i$, $j$, $s$, $t$ with $\ST(s,t)\neq\RC(i,j)$
we enforce that $(A_{ij},B_{ij})\neq(s,t)$, i.e., $A_{ij}\neq s$
or $B_{ij}\neq t$.  In Boolean logic, this becomes the clauses
\begin{equation} \lnot A_{ijs}\lor\lnot B_{ijt} \text{ where $\RC(i,j)\neq\ST(s,t)$} \end{equation}
for all $0\leq i,j,s,t\leq 9$.

\subsection{Symmetry breaking}\label{sec:symbreak-enc}

In this section we describe how we encode Delisle's symmetry breaking
propositions into conjunctive normal form.
In particular, we encode properties of minimal pairs of orthogonal
Latin squares with two relations described in Section~\ref{sec:symmetry-breaking}, as such
constraints do not remove any solutions up to isomorphism.  Thus, in this section
we assume that $(A,B)$ is a minimal pair of orthogonal Latin squares.

First, consider Proposition~\ref{prop:min-trans}, which applies
in all cases except case~4.  It says that
$(A_{1,0},B_{1,0})<(A_{0,1},B_{0,1})$.  First, we encode the weaker constraint
that $A_{1,0}\leq A_{0,1}$ in conjunctive normal form via
\begin{equation} \bigwedge_{\substack{0\leq k,l\leq9\\k>l}}(A_{1,0,k}\limp\lnot A_{0,1,l}) . \end{equation}
Next, we encode that when $A_{1,0}=A_{0,1}$ we have $B_{1,0}<B_{0,1}$.  This
is done via
\begin{equation} \bigwedge_{\substack{0\leq k,l,m\leq9\\ k\geq l}}\bigl((A_{1,0,m}\land A_{0,1,m}\land B_{1,0,k})\limp \lnot B_{0,1,l}\bigr) . \end{equation}

Now consider Proposition~\ref{prop:sort-col}, which says that the symbols in the
first column of~$A$ appear in sorted order within the rows in the same row
equivalence class.
Let~$R$ denote a row equivalence class, and let $R'\coloneqq R\setminus\{\max(R)\}$.  For example,
in cases~1--4, the possible nonempty values for $R'$ are $\{1,2\}$, $\{4,5\}$, and $\{7,8\}$.
In case~5, the possible values for $R'$ are $\{0\}$, $\{2\}$, $\{4\}$, and $\{6,7,8\}$.
In order to ensure that the symbols in the first column of $A$ whose rows are in $R$ we
use the constraints
\begin{equation} \bigwedge_{\substack{i\in R'\\0\leq l<k\leq9}}(A_{i,0,k}\limp\lnot A_{i+1,0,l}) . \end{equation}
Proposition~\ref{prop:sort-row} says that the symbols in the first row of $A$ appear in
sorted order within the columns in the same column equivalence class and can be handled similarly.
Let $C$ denote a column equivalence class and let $C'\coloneqq C\setminus\{\max(C)\}$.  We ensure
the symbols in the first row of $A$ whose columns are in $C$ using the constraints
\begin{equation} \bigwedge_{\substack{j\in C'\\0\leq l<k\leq9}}(A_{0,j,k}\limp\lnot A_{0,j+1,l}) . \end{equation}
(In case 4, we skip the clauses from this constraint
with $C'=\{0\}$, since Proposition~\ref{prop:sort-row} does not apply to column 0 in case 4.)

Proposition~\ref{prop:min-a-sym} says that the symbols in the same $A$-symbol equivalence
class are sorted in the first column of $A$.  Let $S$ denote an $A$-symbol equivalence class
and let $S'\coloneqq S\setminus\{\max(S)\}$.  We ensure the symbols of $S$ appear in sorted
order in the first column of $A$ using the constraints
\begin{equation} \bigwedge_{\substack{s\in S'\\0\leq i'<i\leq9}}(A_{i,0,s}\limp\lnot A_{i',0,s+1}) . \end{equation}
Proposition~\ref{prop:min-b-sym} is handled in the same way.  Letting $T$ denote a $B$-symbol
equivalence class and $T'\coloneqq T\setminus\{\max(T)\}$, we use the constraints
\begin{equation} \bigwedge_{\substack{t\in T'\\0\leq i'<i\leq9}}(B_{i,0,t}\limp\lnot B_{i',0,t+1}) . \end{equation}

Finally, we discuss Proposition~\ref{prop:min-swap}, which applies
in all cases except case~2 and says that $A<B$.
Since we are not in case~2, we have $A_{0,0}=B_{0,0}$ by Proposition~\ref{prop:a00b00}.  As a result,
we start by considering the $(1,0)$th entries of $A$ and $B$, noting that $A<B$
implies that $A_{1,0}\leq B_{1,0}$.  We encode $A_{1,0}\leq B_{1,0}$ into Boolean logic
with the constraints
\begin{equation} \bigwedge_{\substack{0\leq k,l\leq9\\k>l}}(A_{1,0,k}\limp\lnot B_{1,0,l}) . \end{equation}
Next, we consider the case when $A_{1,0}=B_{1,0}$.  In this case, $A<B$ implies that
$A_{2,0}\leq B_{2,0}$, and we encode $A_{1,0}=B_{1,0}\limp A_{2,0}\leq B_{2,0}$
into Boolean logic with the constraints
\begin{equation} \bigwedge_{\substack{0\leq k,l,m\leq9\\k>l}}\bigl((A_{1,0,m}\land B_{1,0,m}\land A_{2,0,k})\limp\lnot B_{2,0,l}\bigr) . \end{equation}
We could continue in this fashion and also encode constraints corresponding to
$(A_{1,0}=B_{1,0}\land A_{2,0}=B_{2,0})\limp A_{3,0}\leq B_{3,0}$, etc.  However, in
practice it was sufficient to only consider the entries to the $(2,0)$th entry.  In other words,
we did not encode the full constraint $A<B$ in our SAT instances
but the strictly weaker constraint $[A_{0,0},A_{1,0},A_{2,0}]\leq[B_{0,0},B_{1,0},B_{2,0}]$.

\section{Exhaustive Enumeration and Proof Generation}\label{sec:enumerating}

This section explains the process by which we use a SAT solver to
find all solutions of a SAT instance (see Section~\ref{sec:exhaustive}) and the process
by which we generate and check proof certificates that the enumeration
was performed correctly, with no missing solutions (see Section~\ref{sec:proofs}).

\subsection{Exhaustive enumeration}\label{sec:exhaustive}

Typical modern SAT solvers stop as soon as a satisfying assignment is found and do not
support exhaustively enumerating all solutions of a SAT instance.
However, the IPASIR-UP interface~\cite{Fazekas2024}, as supported by the SAT solver
\textsc{CaDiCaL}~\cite{Biere2024}, enables us to find all solutions.
IPASIR-UP is an interface that can be used to inject custom code into a SAT solver
in order to change its behaviour.  One function supported by IPASIR-UP is
\verb|cb_check_found_model|, a function that is called when the SAT solver has found a new solution.
Inside \verb|cb_check_found_model| users are able to add code that modifies a SAT instance whenever
a solution is found.

In our case, we add a ``blocking clause'' into the SAT instance
every time a solution is found that prevents the solution from occurring again.  Once the blocking
clause has been added, the solver
continues looking for a new solution.  Eventually, once all solutions have been found, the solver
reports that the updated instance (i.e., the instance augmented with all blocking clauses)
is \emph{unsatisfiable}---it has no solutions.

The blocking clause that we inject into the SAT instance must only block the single
solution that was found and no others.  Suppose $(S,T)$ is the pair of orthogonal Latin squares
that the solver found.  We want to add the constraint
\begin{equation} \lnot\Bigl( \bigwedge_{0\leq i,j\leq9} (A_{i,j,S_{i,j}}\land B_{i,j,T_{i,j}}) \Bigr) , \end{equation}
which is logically equivalent to
$\bigvee_{0\leq i,j\leq9}(\lnot A_{i,j,S_{i,j}}\lor\lnot B_{i,j,T_{i,j}})$.
Note that this clause contains $2\cdot10^2=200$ literals.  An observation that allows us to shorten
this clause is to note that it is sufficient to block only the upper-left $9\times9$ entries
in $(S,T)$, because once those entries have been fixed the remaining entries are forced by the
Latin square constraints.  This allows us to replace the bound $0\leq i,j\leq9$ in the blocking clause
with the bound $0\leq i,j\leq8$, thereby shrinking the clause to $2\cdot9^2=162$ literals.

\subsection{Proof generation and verification}\label{sec:proofs}

All our calls to a SAT solver will eventually finish with an unsatisfiable result (i.e., no solutions)
as a result of the blocking clauses that we inject into the SAT instance in order to perform an exhaustive search.
Modern SAT solvers such as \textsc{CaDiCaL} support generating a ``proof certificate'' of unsatisfiability.
The proof certificate contains a log of the deductions that the solver
made in order to determine that the SAT instance has no solutions.  The certificate can then be checked by a proof
verifier, a separate program that verifies each deduction in the proof logically follows from the previous deductions.

The certificates we generate are based on the DRAT proof format~\cite{Wetzler2014}.
A DRAT certificate consists of the list of clauses deduced by the solver
in the order in which they were deduced.  The final clause in an unsatisfiability certificate
will be the empty clause.  The empty clause being a logical consequence of the original SAT instance
proves that the original instance was unsatisfiable, since no truth assignments satisfy the empty clause.

Every step in a DRAT proof is classified as either an \emph{addition}---a clause that can be deduced
from previously deduced clauses or clauses in the original SAT instance---or
a \emph{deletion}, a clause that was previously added but is no longer needed and should be removed.
Our work uses a simple extension of the DRAT format first proposed by Bright et al.~\cite{Bright2020}.  In this
extension a third kind of step is supported, a \emph{trusted addition}---a
clause that will be added into the list of clauses in the proof
even though it cannot necessarily be deduced from the previous clauses in the proof.

We require trusted clauses in our proofs
because the blocking clauses we used to perform exhaustive enumeration
were added through the IPASIR-UP interface, not deduced by the solver, and therefore
cannot be derived through the typical logical deduction process.
Thus, in our DRAT proofs when a blocking clause is generated a trusted addition is written into the DRAT proof.

A \emph{proof verifier} takes as input the SAT instance
and a DRAT proof of unsatisfiability and verifies every addition step in the proof logically follows from
the current set of derived clauses in conjunction with the clauses in the original SAT instance.
Deletion steps remove clauses in the current set of derived clauses when they are no longer needed in order to improve the efficiency of the
proof verifier.  Trusted addition steps add a clause into the current set of derived clauses without
verifying its deducibility.

\section{Results}\label{sec:results}

\begin{table}[b]
\caption{A summary of the running times (in seconds) of the 45 instances run in each of the five cases.
The minimum DRAT proof size is also provided as well as the number of solutions found in each case.}
\centering
\begin{tabular}{c@{\quad}r@{\quad}r@{\quad}r@{\quad}r@{\quad}c@{\quad}r}
case &        mean &      median &     minimum &     maximum & proof size & solutions \\
   1 &      5971.2 &      6116.8 &      4467.2 &      7307.3 & 3.6 GiB & 3904\phantom{00} \\
   2 &         0.9 &         0.9 &         0.7 &         1.4 & 2.1 MiB & 0\phantom{00} \\
   3 &         2.1 &         2.1 &         1.7 &         2.6 & 4.1 MiB & 0\phantom{00} \\
   4 &         2.7 &         2.5 &         2.0 &         4.9 & 5.3 MiB & 0\phantom{00} \\
   5 &      1981.2 &      1965.4 &      1775.2 &      2267.8 & 1.6 GiB & 22320\phantom{00}
\end{tabular}
\label{tbl:results}
\end{table}

We now discuss our computational results enumerating all 4-nets(10) with at least two
nontrivial relations.  Our results were run on an Intel i7 CPU running at 2.8~GHz
and using the SAT solver \textsc{CaDiCaL}~1.9.4 using up to~250 MiB of memory.
Our scripts are freely available and archived on Zenodo at \href{https://doi.org/10.5281/zenodo.17352786}{doi.org/10.5281/zenodo.17352786}.

We use a Python script to generate SAT instances in each of the five possible forms (cases 1--5)
following the encoding described in Section~\ref{sec:encoding}.  In order to mitigate the effect
of randomness in the search, each case was independently solved 45 times, each time with a different
random seed.  The differing random seeds ensure that each instance of \textsc{CaDiCaL} makes
different choices during the solving process.  A tabular summary of the results of these trials is
provided in Table~\ref{tbl:results}, and a box plot of the running times is provided in
Figure~\ref{fig:results}.

The SAT solver determined that cases 2, 3, and 4 all had no solutions and these cases were always solvable
in a few seconds.  It is interesting to note that Delisle~\cite{delisle2010search} ruled out cases
2 and 4 theoretically using a counting argument, but despite the fact we did not explicitly use this
in the SAT encoding, the SAT solver quickly proved unsatisfiability on its own.
The fact that the solver also quickly ruled out case~3 suggested
to us that case 3
was also resolvable using a counting argument, and we were successful in finding one (included
at the end of this section).
Unfortunately, the proofs produced by the SAT solver,
while logically correct, are not intended to be human-readable and the contents of the proofs did
not provide us with any mathematical insight.

In case 1 the SAT solver found 3,904 solutions, and in case 5 the SAT solver found 22,320 solutions.
The latter count agrees with the count reported by Delisle, but the former count is
exactly half of Delisle's count.  We contacted Delisle and Myrvold (who ran independent searches for
4-nets(10) with two relations) and their complete enumeration
in cases~1 and~5 matched ours exactly, so the count reported by Delisle in case~1
was simply a misprint.  We also verified that up to main class equivalence there are exactly 91 solutions
(7 in case~1 and 84 in case~5) and that 6 of these (all in case~5) are of rank 34
while the other 85 are of rank 35.

\begin{figure}[t]
    \centering
    \begin{subfigure}[b]{0.501\textwidth}
        \includegraphics[width=\textwidth]{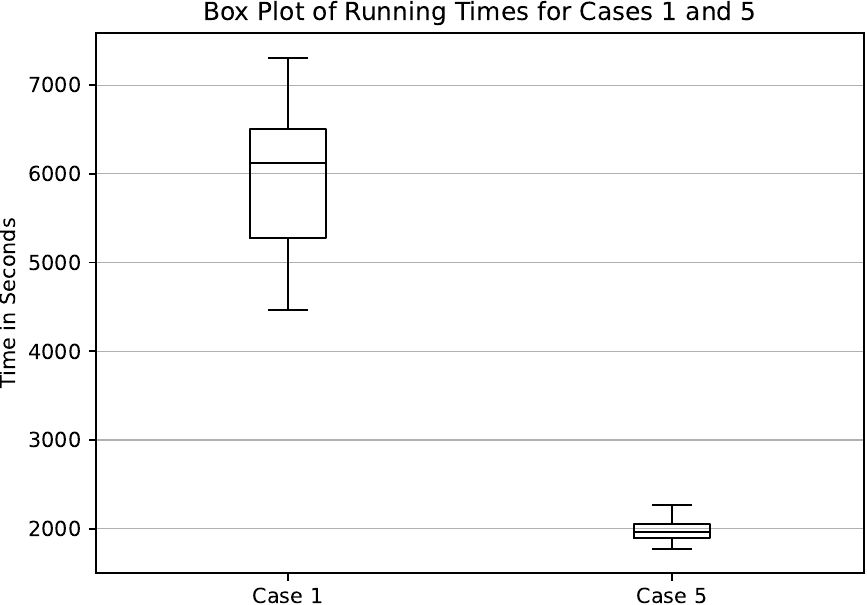}
        \caption{Cases 1 and 5}
    \end{subfigure}
    \begin{subfigure}[b]{0.479\textwidth}
        \includegraphics[width=\textwidth]{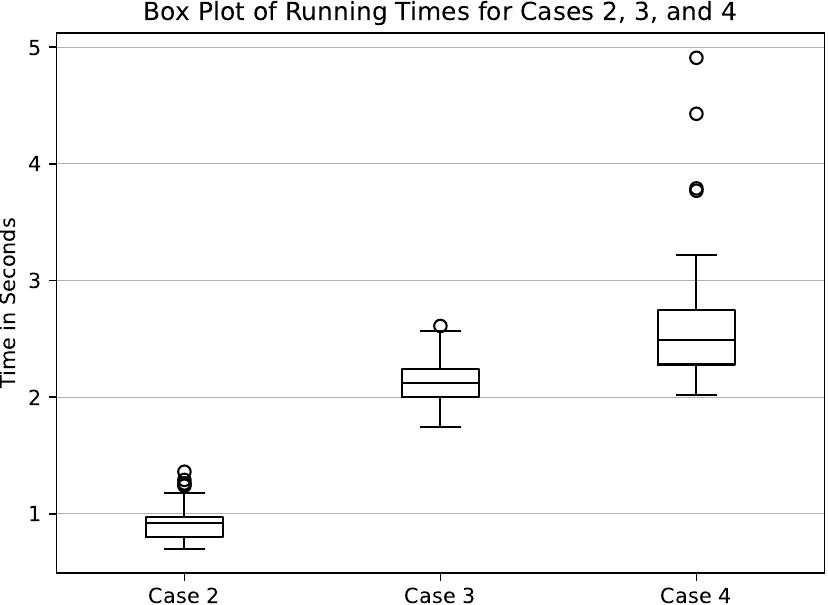}
        \caption{Cases 2, 3, and 4}
    \end{subfigure}

    \caption{A box plot visualization of the running times of the 45 instances solved in each case.
    }
    \label{fig:results}
\end{figure}

As mentioned in Section~\ref{sec:proofs}, \textsc{CaDiCaL} was configured to generate
DRAT proofs and each time a blocking clause was generated a ``trusted addition''
clause was added to the proof.  Since our proofs in
cases~1 and~5 use trusted additions for the blocking clauses, they cannot be verified using a standard DRAT proof checker like
\textsc{DRAT-trim}~\cite{Wetzler2014}.
However, \textsc{DRAT-trim-t} (a fork of \textsc{DRAT-trim}) supports trusted additions, so
we verified all our proofs using \textsc{DRAT-trim-t}~\cite{DRAT-trim-t}.  The proof size of the shortest proof produced in each case
is given in Table~\ref{tbl:results}.  The proofs for cases 2--4 were all verified in under a second,
while the proof in case~1 was verified in 3.2 hours and the proof in case~5 was verified in 0.9 hours.

The fact that the SAT solver was able to quickly rule out case~3 inspired us to look
for a counting argument that could rule out this case.  We were successful using an approach
similar to the arguments used by Gill and Wanless to count the number of points of certain
type in a net~\cite[Thm.~3.1]{Gill2023}.

\begin{theorem}\label{thm:pointtype}
There exist no 4-nets(10) with two relations in cases~2--4.
\end{theorem}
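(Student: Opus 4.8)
The plan is to follow the point-counting philosophy of Gill and Wanless~\cite[Thm.~3.1]{Gill2023}: since a relation is by definition an $\F_2$-dependency among the lines, every point lies on an \emph{even} number of lines of each of $R_1$ and $R_2$, and this parity constraint, together with the global incidence counts of the net, restricts how many points can be of each ``type.'' Throughout I write the form as $[[x_1,y_1,z_1],\dots,[x_4,y_4,z_4]]$; since $R_1$ and $R_2$ both have type $[4,4,4,4]$ we have $y_\ell=z_\ell=4-x_\ell$ and $x_\ell\in\{1,2\}$ by Proposition~\ref{prop2reltypes}, while the number of lines of class~$\ell$ in neither relation is $2+x_\ell$.

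Cases~2 and~4 I would dispose of with a coarse invariant. The symmetric difference $R_1\triangle R_2$ is again a relation, and it is nontrivial because $R_1$ and $R_2$ are linearly independent modulo the space of trivial relations, so their sum is not trivial. In class~$\ell$ it contains $y_\ell+z_\ell=2(4-x_\ell)$ lines, giving type $[6,6,6,4]$ in case~2 and $[6,4,4,4]$ in case~4. Each is complementable (recall complementing flips an even number of parallel classes) to a relation having at most $5$ lines in three of its classes, so Proposition~\ref{prop4netrel} would force the type $[4,4,4,4]$, which neither attains. Equivalently, and in a form that foreshadows the harder case, orthogonality makes $(i,j)\mapsto(A_{ij},B_{ij})$ a bijection from cells to symbol pairs, and the relation condition $\RC(i,j)=\ST(A_{ij},B_{ij})$ forces $\lvert\{(i,j):\RC(i,j)=r\}\rvert=\lvert\{(s,t):\ST(s,t)=r\}\rvert$ for each label $r$. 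A short computation (e.g.\ via the Walsh--Hadamard transform of the relation-signature distribution of each parallel class) shows that equality of all four counts is equivalent to $(4x_1-6)(4x_2-6)=(4x_3-6)(4x_4-6)$, and this fails exactly in cases~2 and~4.

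The genuine obstacle is case~3, which I expect to resist every coarse invariant: there $R_1\triangle R_2$ has type $[6,6,4,4]$, which \emph{is} complementable to $[4,4,4,4]$, and both products above equal $4$, so the bijection count is automatically satisfied. To make progress I would count points by their exact incidence pattern, letting $M_{pq}$ denote the number of points lying on exactly $p$ lines of $R_1$ and $q$ lines of $R_2$, so $p,q\in\{0,2,4\}$. Writing $a_P$ and $b_P$ for the number of lines of $R_1$ and of $R_2$ through a point $P$, double counting yields the fixed margins (the $p$-marginals are $4,72,24$, and symmetrically for $q$) from $\sum_P a_P=160$ and $\sum_P\binom{a_P}{2}=96$, together with the mixed identity $\sum_P a_P b_P=10\,\lvert R_1\cap R_2\rvert+192$, which in case~3 reads $4M_{22}+8M_{24}+8M_{42}+16M_{44}=252$.

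The hard part is that these moment equations \emph{alone} do not suffice: with $\lvert R_1\cap R_2\rvert=6$ they remain feasible over the nonnegative integers, so the crux is to supply further exact counts coming from the specific line distribution $x=(1,1,2,2)$. For example, because classes~1 and~2 each contain a \emph{unique} line of $R_1\cap R_2$, any point meeting all four lines of $R_1\cap R_2$ is uniquely determined, so $M_{44}\le 1$ in case~3 (contrast $M_{44}\le 4$ in case~5); one similarly hopes to pin down the remaining ``extreme'' cells by analysing the four points on all of $R_1$, tracking their incidence with $R_2$, and possibly invoking the third relation $R_1\triangle R_2$ complemented to type $[4,4,4,4]$. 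I expect the decisive step to be choosing the one refined point-type whose geometrically forced value is incompatible with the margin and moment equations, producing a negative or non-integral count; assembling exactly this clash is what converts the SAT solver's rapid refutation of case~3 into a human proof.
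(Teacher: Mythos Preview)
Your treatment of cases~2 and~4 is correct and in fact cleaner than the paper's: the observation that the trivial relations form exactly the even-weight subsets of full parallel classes, so that complementing can only flip an even number of type entries, immediately kills the types $[6,6,6,4]$ and $[6,4,4,4]$ of $R_1+R_2$ via Proposition~\ref{prop4netrel}. The paper instead handles all three cases uniformly by a linear-algebra computation, and for cases~2 and~4 this is overkill compared to your parity argument.

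For case~3, however, your proposal is not a proof but a plan, and it stops precisely at the hard step. You set up the coarse counts $M_{pq}$, correctly observe that the marginal and mixed-moment equations remain feasible, and then say only that you ``expect'' some refined count to produce a contradiction---without identifying which one or carrying out the computation. That is the gap: you have not supplied the decisive constraint. Moreover, your nine variables $M_{pq}$ are almost certainly too coarse; they forget which parallel class each incidence comes from, and there is no evident reason why any further inequality on the $M_{pq}$ alone would be forced.

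The paper's proof refines the point types to the full four-character string $ijkl\in\{0,1,2,3\}^4$ recording the relational code of the line through each point in \emph{each} parallel class separately, giving $64$ nonzero unknowns $t_{ijkl}$. Counting points whose type agrees with a prescribed value in two prescribed coordinates yields $\binom{4}{2}\cdot 3^2=54$ linear equations (using codes $0,1,2$); reducing this system to row echelon form produces the row $t_{0123}-t_{3210}=-\tfrac12$, which has no integer solution. So the missing idea in your sketch is to pass from the aggregate counts $M_{pq}$ to the per-class relational codes $t_{ijkl}$, where the system becomes rigid enough to force a non-integrality.
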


\begin{proof}
Say $R_1$ and $R_2$ are two relations in a 4-net(10).
In what follows we use the relational code `0' to denote $R_1\cap R_2$, `1' to denote $R_1\setminus R_2$,
`2' to denote $R_2\setminus R_1$, and `3' to denote $\overline{R_1}\cap\overline{R_2}$.
The \emph{type} of a point is a 4-character $\{0,1,2,3\}$-string denoting the point's relational codes
from each parallel class.  For example, a point of type 1122 is in $R_1$ but not $R_2$ in the first
two classes, and is in $R_2$ but not $R_1$ in the last two classes.
Let $t_{ijkl}$ denote the number of points in the net of type $ijkl$.
Note that for $R_2$ to be a relation it must be the case that $t_{ijkl}=0$ when $i+j+k+l\not\equiv0\pmod{2}$,
and similarly for $R_1$ to be a relation it must be the case that $t_{ijkl}=0$ when
$\lfloor i/2\rfloor+\lfloor j/2\rfloor+\lfloor k/2\rfloor+\lfloor l/2\rfloor\not\equiv0\pmod{2}$,
so there are $4^4/4=64$ nonzero $t_{ijkl}$ variables.

Let $[[x_1,y_1,z_1],[x_2,y_2,z_2],[x_3,y_3,z_3],[x_4,y_4,z_4]]$ denote the form of $R_1$ and~$R_2$
as defined in Section~\ref{sec:background}.
Now count the number of points of type $00\mathord{*}\mathord{*}$ where the symbols `$*$' are arbitrary.
Note there are exactly $x_1x_2$ points which lie on both a line with index in $[0,x_1)$ and a line with index in $[10,10+x_2)$,
so
\begin{equation} \sum_{0\leq k,l\leq 3} t_{00kl} = x_1x_2 . \end{equation}
Similar equations can be derived by counting points of other types.
For example, there are exactly $x_1x_3$ points of type $0\mathord{*}0\mathord{*}$, exactly $y_1y_4$
points of type $1\mathord{*}\mathord{*}1$, and exactly $z_1x_2$ points of type $20\mathord{*}\mathord{*}$,
resulting in the equations
\begin{equation} \sum_{0\leq j,l\leq 3} t_{0j0l} = x_1x_3, \quad \sum_{0\leq j,k\leq 3} t_{1jk1} = y_1y_4, \quad \sum_{0\leq k,l\leq 3} t_{20kl} = z_1x_2 . \end{equation}

In case~3, the linear system corresponding to the $3^2\binom{4}{2}=54$ ways of fixing
two entries in the point type to values in $\{0,1,2\}$ when converted into reduced row echelon form
has a row corresponding to $t_{0123}-t_{3210}=-1/2$ which has
no integer solutions.

In cases~2 and~4, the linear systems corresponding to the $4^2\binom{4}{2}=96$ ways of fixing
two entries in the point type to values in $\{0,1,2,3\}$ are both inconsistent over the reals. \qed
\end{proof}
The counting argument in the proof of Theorem~\ref{thm:pointtype} provides some theoretical
conditions speculated on by Delisle in their original work:

\begin{quote}
\emph{Interestingly, no pairs of MOLS are completable for case 3. Some theoretical conditions
possibly exist to explain this, but are not known at this time.}~\cite{delisle2010search}
\end{quote}

\section{Conclusion}\label{sec:conclusion}

In this paper we recreated Delisle's 2010 enumeration of all
4-nets(10) with two nontrivial relations.  In contrast to Delisle's original search
that used a custom-written backtracking program, we use a SAT solver and found that
the SAT solver could complete the search over 6000 times faster (when run on modern hardware)
than the original backtracking code.  This is in part due to improvements in processing power,
but it is also due to the powerful search-with-learning algorithms used in modern SAT solvers
that can be effective at solving problems in design theory.
For example, the author of the SAT solver SATO, H.~Zhang, observed SAT solvers are particularly effective
at solving Latin square problems:
\begin{quote}
\emph{In the earlier stage of our study of Latin square problems,
the author wrote two special-purpose programs.
After observing that these two programs could not do better than SATO,
the author has not written any special-purpose search programs since then.}~\cite{Zhang2021}
\end{quote}
Moreover, our results are more trustworthy in the sense that they do not require trusting
the implementation of a search algorithm.  Instead, we generate certificates that our search was
exhaustive, and our results only require trusting the reduction of
the problem into Boolean logic (as described in Section~\ref{sec:encoding})
and the proof verifier that we use (as described in Section~\ref{sec:proofs}).

For future work, it would be interesting to use a SAT solver to investigate
the results of Gill and Wanless~\cite{Gill2023} who enumerated all 4-nets(10)
with a single nontrivial relation and ruled out the existence of a relation
of type $[2,2,2,4,6]$ in a 5-net(10).  More potential future work would be to rule
out the existence of other relation types in a 5-net(10) or a 6-net(10),
or to rule out the existence of two nontrivial relations in a 5-net(10) or a
6-net(10).  If the latter was accomplished, the work of Howard~\cite{howardthesis}
would imply that $\MOLS{4}{10}$ do not exist.  SAT solvers might also be useful
in exploring the existence of other combinatorial designs whose existence is uncertain,
such as non-Desarguesian projective planes of order~11.

\paragraph{Acknowledgements} We would like to thank Wendy Myrvold and Erin Delisle
who answered questions about their work and provided the 26,224 pairs of Latin squares
found in their search.  We would also like to thank the anonymous reviewer who
provided useful feedback on this manuscript.

\bibliographystyle{splncs04}
\bibliography{bibliography}

\begin{thebibliography}{10}
\providecommand{\url}[1]{\texttt{#1}}
\providecommand{\urlprefix}{URL }
\providecommand{\doi}[1]{https://doi.org/#1}

\bibitem{Ajani2024}
Ajani, Y., Bright, C.: {SAT} and lattice reduction for integer factorization.
  In: Proceedings of the 2024 International Symposium on Symbolic and Algebraic
  Computation. p. 391–399. ISSAC ’24, ACM (Jul 2024).
  \doi{10.1145/3666000.3669712}

\bibitem{Biere2024}
Biere, A., Faller, T., Fazekas, K., Fleury, M., Froleyks, N., Pollitt, F.:
  {CaDiCaL} 2.0. In: Computer Aided Verification. p. 133–152. Springer Nature
  Switzerland (2024). \doi{10.1007/978-3-031-65627-9_7}

\bibitem{DRAT-trim-t}
Bright, C.: The {DRAT-trim-t} extension of the {DRAT-trim} checker.
  \url{https://github.com/curtisbright/drat-trim-t} (2025)

\bibitem{Bright2020}
Bright, C., Cheung, K.K.H., Stevens, B., Kotsireas, I., Ganesh, V.:
  Nonexistence certificates for ovals in a projective plane of order ten. In:
  Combinatorial Algorithms. p. 97–111. Springer International Publishing
  (2020). \doi{10.1007/978-3-030-48966-3_8}

\bibitem{bright2021sat}
Bright, C., Cheung, K.K.H., Stevens, B., Kotsireas, I., Ganesh, V.: A
  {SAT}-based resolution of {Lam’s} problem. Proceedings of the AAAI
  Conference on Artificial Intelligence  \textbf{35}(5),  3669–3676 (May
  2021). \doi{10.1609/aaai.v35i5.16483}

\bibitem{Bright2019}
Bright, C., \DJ{}okovi\'c, D.{\v{Z}}., Kotsireas, I., Ganesh, V.: A {SAT+CAS}
  approach to finding good matrices: New examples and counterexamples.
  Proceedings of the AAAI Conference on Artificial Intelligence
  \textbf{33}(01),  1435–1442 (Jul 2019). \doi{10.1609/aaai.v33i01.33011435}

\bibitem{bright2020effective}
Bright, C., Gerhard, J., Kotsireas, I., Ganesh, V.: Effective problem solving
  using {SAT} solvers. In: Maple in Mathematics Education and Research. p.
  205–219. Springer International Publishing (2020).
  \doi{10.1007/978-3-030-41258-6_15}

\bibitem{myrvoldmols}
Bright, C., Keita, A., Stevens, B.: Myrvold's results on orthogonal triples of
  $10 \times 10$ {Latin} squares: A {SAT} investigation (2025).
  \doi{10.48550/ARXIV.2503.10504}, arXiv:2503.10504

\bibitem{Bright2022}
Bright, C., Kotsireas, I., Ganesh, V.: When satisfiability solving meets
  symbolic computation. Communications of the ACM  \textbf{65}(7),  64–72
  (Jun 2022). \doi{10.1145/3500921}

\bibitem{bruck1963finite}
Bruck, R.H.: Finite nets. {II}. {Uniqueness} and imbedding. Pacific Journal of
  Mathematics  \textbf{13}(2),  421–457 (Jun 1963).
  \doi{10.2140/pjm.1963.13.421}

\bibitem{delisle2010search}
Delisle, E.: The Search for a Triple of Mutually Orthogonal {Latin} Squares of
  Order Ten: Looking Through Pairs of Dimension Thirty-Five and Less. Master's
  thesis, University of Victoria (2010), \url{http://hdl.handle.net/1828/2964}

\bibitem{Fazekas2024}
Fazekas, K., Niemetz, A., Preiner, M., Kirchweger, M., Szeider, S., Biere, A.:
  Satisfiability modulo user propagators. Journal of Artificial Intelligence
  Research  \textbf{81},  989–1017 (Dec 2024). \doi{10.1613/jair.1.16163}

\bibitem{Gill2023}
Gill, M.J., Wanless, I.M.: Pairs of {MOLS} of order ten satisfying non-trivial
  relations. Designs, Codes and Cryptography  \textbf{91}(4),  1293–1313 (Apr
  2023). \doi{10.1007/s10623-022-01149-6}

\bibitem{howard2010counterexample}
Howard, L., Myrvold, W.: A counterexample to {Moorhouse's} conjecture on the
  rank of nets. Bull. Inst. Combin. Appl  \textbf{60},  101--105 (2010)

\bibitem{howardthesis}
Howard, L.: Nets of Order $4m+2$: Linear Dependence and Dimensions of Codes.
  Ph.D. thesis, University of Victoria (2009),
  \url{http://hdl.handle.net/1828/1566}

\bibitem{kaski2006}
Kaski, P., Östergård, P.R.: Classification Algorithms for Codes and Designs.
  Springer-Verlag (2006). \doi{10.1007/3-540-28991-7}

\bibitem{Lam1990}
Lam, C.W.H.: Opinion: How reliable is a computer-based proof? The Mathematical
  Intelligencer  \textbf{12}(1),  8–12 (Dec 1990). \doi{10.1007/bf03023977}

\bibitem{lam1989non}
Lam, C.W.H., Thiel, L., Swiercz, S.: The non-existence of finite projective
  planes of order 10. Canadian Journal of Mathematics  \textbf{41}(6),
  1117–1123 (Dec 1989). \doi{10.4153/cjm-1989-049-4}

\bibitem{mann1942construction}
Mann, H.B.: The construction of orthogonal {Latin} squares. The Annals of
  Mathematical Statistics  \textbf{13}(4),  418--423 (1942).
  \doi{10.1214/aoms/1177731539}

\bibitem{MarquesSilva1996}
Marques~Silva, J., Sakallah, K.: {GRASP---A} new search algorithm for
  satisfiability. In: Proceedings of International Conference on Computer Aided
  Design. p. 220–227. ICCAD-96, IEEE Comput. Soc. Press (1996).
  \doi{10.1109/iccad.1996.569607}

\bibitem{sinz2005towards}
Sinz, C.: Towards an optimal {CNF} encoding of {Boolean} cardinality
  constraints. In: Principles and Practice of Constraint Programming - CP 2005.
  p. 827–831. Springer Berlin Heidelberg (2005). \doi{10.1007/11564751_73}

\bibitem{Wetzler2014}
Wetzler, N., Heule, M.J.H., Hunt, W.A.: {DRAT-trim}: Efficient checking and
  trimming using expressive clausal proofs. In: Theory and Applications of
  Satisfiability Testing – SAT 2014. p. 422–429. Springer International
  Publishing (2014). \doi{10.1007/978-3-319-09284-3_31}

\bibitem{zhang1997specifying}
Zhang, H.: Specifying {Latin} square problems in propositional logic, pp.
  115--146. MIT Press, Cambridge, Massachusetts (1997),
  \url{https://dl.acm.org/doi/10.5555/271101.271124}

\bibitem{Zhang2021}
Zhang, H.: Combinatorial designs by {SAT} solvers. In: Handbook of
  Satisfiability. pp. 819--858. IOS Press (Feb 2021). \doi{10.3233/faia201005}

\end{thebibliography}

\end{document}